\documentclass[11pt,reqno]{amsart}
\usepackage{amsmath,amssymb,amscd,graphicx, color,tensor,hyperref,datetime}

\voffset=0.0truein \hoffset=-0.75truein
\setlength{\textwidth}{6.2in} \setlength{\textheight}{8.8in}
\setlength{\topmargin}{-0.2in} \hfuzz5pt

\newcommand{\R}{{ \mathbb{R}  }}

\newcommand{\bke}[1]{\left( #1 \right)}
\newcommand{\bkt}[1]{\left[ #1 \right]}
\newcommand{\bket}[1]{\left\{ #1 \right\}}
\newcommand{\norm}[1]{\left\Vert #1 \right\Vert}
\newcommand{\abs}[1]{\left| #1 \right|}

\begin{document}
\bibliographystyle{plain}

\newtheorem{defn}{Definition}
\newtheorem{lemma}{Lemma}
\newtheorem{proposition}{Proposition}
\newtheorem{theorem}{Theorem}
\newtheorem{assumption}{Assumption}
\newtheorem{cor}{Corollary}
\newtheorem{remark}{Remark}
\numberwithin{equation}{section}

\newenvironment{pfthm1}{{\par\noindent\bf
           Proof of Theorem~\ref{THM1}. }}{\hfill\fbox{}\par\vspace{.2cm}}
\newenvironment{pfthm2}{{\par\noindent\bf
           Proof of Theorem~\ref{THM2}. }}{\hfill\fbox{}\par\vspace{.2cm}}
\newenvironment{pfthm3}{{\par\noindent\bf
           Proof of Theorem~\ref{THM2.5}. }}{\hfill\fbox{}\par\vspace{.2cm}}
\newenvironment{pfthm4}{{\par\noindent\bf
           Proof of Theorem~\ref{THM3}. }}{\hfill\fbox{}\par\vspace{.2cm}}


\title{Global well-posedness of   logarithmic Keller-Segel type systems}

\subjclass[2010]{ 35Q92,  35Q91,   35K57}%
\keywords{  global well-posedness,  logarithmic Keller-Segel, urban crime}


\author{Jaewook Ahn}%
\address{Department of Mathematics, Dongguk University, Seoul 04620, Republic of Korea}%
\email{jaewookahn@dgu.ac.kr}
\author{Kyungkeun Kang}%
\address{School of Mathematics $\&$ Computing(Mathematics),  Yonsei University, Seoul 03722, Republic of Korea}%
\email{kkang@yonsei.ac.kr}
\author{Jihoon Lee}%
\address{Department of Mathematics, Chung-Ang University, Seoul 06974, Republic of Korea}%
\email{jhleepde@cau.ac.kr}

\begin{abstract}
  We consider a class of logarithmic Keller-Segel type systems   
modeling the spatio-temporal behavior of either  chemotactic cells or  criminal activities  in  spatial dimensions two and higher.
   Under certain assumptions on  parameter values and given functions,  the existence of  classical solutions  is established globally in time, provided that initial data are sufficiently regular. In particular, we enlarge the range of chemotatic sensitivity $\chi$, compared to known results, in case that spatial dimensions are between two and eight.
   In addition, we provide new type of small initial data to obtain global classical solution, which is also applicable to the urban crime model.  
  We discuss long-time asymptotic behaviors of solutions as well.
 \end{abstract}
  \maketitle

\section{Introduction}
The formation of high-density clusters  has been observed in the movement of  chemotactic cells \cite{KS71},   in the dynamics of self-gravitating particles \cite{P90},
and in the  time-evolution of     residential burglary  data \cite{SBBT08}.
  The study of spatio-temporal  dynamics of  such clusters is important since it can be used  to gain insight on how to enhance or suppress the formation of   clusters. 
In the real world, for example,  it can help   suppress the formation of   
   criminal hotspots observed in social problems.

One may use mathematical tools to analyze the spatio-temporal  dynamics of clusters. 
 In this paper,  we deal with the cross-diffusive system which describes either the  movement of   chemotactic cells \cite{KS71} or  the propagation of criminal activities \cite{SDPTBBC08}:
\begin{equation}\label{MODEL0}
 \left\{
\begin{array}{ll}
\partial_{t}u=\Delta u -\chi\nabla \cdot \bke{ u  \nabla \log v    }- \sigma u v +\varphi, \qquad& x\in \Omega,\,\, t>0, \\  
\partial_{t}v=\Delta v-v+uv^{\lambda}+\psi, \qquad& x\in \Omega,\,\, t>0, \\
\displaystyle \frac{\partial u}{\partial \nu} =  \frac{\partial v}{\partial \nu}=0, \qquad& x\in\partial \Omega,\,\, t>0, \\
u(x,0)=u_{0}(x),\,\, v(x,0)=v_{0}(x),\quad &x\in \Omega.
\end{array}
\right. 
\end{equation}
Here, $\Omega\subset\R^{d}$, $d\ge2$, is a smooth and bounded domain, $\nu$
 is  the unit outward normal vector to $\partial\Omega$,  $\chi$, $\sigma$  and $\lambda$ are  given non-negative parameters,   $\varphi$ and  $\psi$ are given non-negative  functions, and $u$ and $v$ are unknowns.
  
   The system \eqref{MODEL0}     represents urban crime model (we refer it to (UC) in the sequel), which is a generalized version of logarithmic Keller-Segel model. Namely, if $\sigma=\varphi=\psi=\lambda=0$, (UC) becomes 
  \[
  \partial_{t}u=\Delta u -\chi\nabla \cdot \bke{ u  \nabla \log v    }, \qquad
\partial_{t}v=\Delta v-v+u.
  \]
  From now on, we call it (KS), unless any confusion is to be expected.
  
In the following, we briefly introduce (KS), (UC) and related works.  Our main interests are their global classical solvability and    long-time asymptotics.

The system (KS) describes the biased movement of chemotactic cells toward  a higher chemoattractant concentration.  The unknown functions $u$ and $v$  denote  the density of chemotactic cells,  and the chemoattractant concentration, respectively.  As to global solvability in (KS), various thresholds of $\chi$ have been introduced. Winkler \cite{W11}  obtained the  global classical solvability for
 \begin{equation}\label{THS1}
 \chi<\sqrt{\frac{2}{d}},\quad  d\ge2,
 \end{equation}
 and later on, Lankeit \cite{L10}  relaxed the condition \eqref{THS1} for $d=2$ as $\chi<\chi_{0}\approx1.015$.
 In the case of the parabolic-elliptic counterpart, Nagai-Senba \cite{NS98} proved the existence of finite time blowup radial solutions for $\chi>\frac{2d}{d-2}$, $d\ge3$, and the existence of  global radial classical solutions for $\chi<\frac{2}{d-2}$, $d\ge3$. Furthermore, Fujie-Winkler-Yokota \cite{FWY15}
established global classical solvability for $\chi<\frac{2}{d}$, $d\ge2$, and later on, this  threshold for $d=2$ was enlarged to  infinity  by Fujie-Senba \cite{FS16}(see also \cite{FS18}).  For a generalized solution concept, we refer to \cite{AKL19,B99, B20, LW17,SW11}.  As to long-time asymptotics of (KS), Winkler-Yokota \cite{WY18} obtained   the asymptotic stability of constant steady states under \eqref{THS1} and the  smallness of the domain size  $|\Omega|$, and later on, Ahn \cite{A19} removed out the restriction on the domain size   by   assuming $\chi\le \frac{1}{2}$ and the convexity of $\Omega$. In the case of the parabolic-elliptic counterpart, qualitative properties of solution such as  eventual regularity and asymptotic behavior can be found in  \cite{AKL19}.

The system \eqref{MODEL0} with $\sigma>0$ and $\lambda=0$ or $1$ represents the  model (UC),  
 where    $u$ denotes the density of criminals, $v$  denotes  the attractiveness value, and given functions $\varphi$ and $\psi$  denote the density of additional criminals, and  the source of attractiveness, respectively. Taking into account two effects,  the broken window effect and the repeat near-repeat effect, the  spatio-temporal dynamic of criminal  occurrences is modeled in (UC). For more information on modeling, see \cite{SBB10,SBBT08,SDPTBBC08}.
 As to global solvability in (UC), Rodr\'iguez  \cite{R13} obtained the global classical solvability  for $(\chi,d,\lambda)=(1,2,0)$, and    Freitag \cite{F18}   
 obtained the existence of global   classical solutions for $\chi<\frac{2}{d}$, $d\ge2$, $\lambda=1$(see also \cite{SL19}).  
Later on,  Rodr\'iguez-Winkler \cite{RW19}  established the global classical solvability   for arbitrary   $\chi>0$ and $d=\lambda=1$(see also \cite{WWF14} for the case that $\varphi$, $\psi$ are constants).  
  Recently, Tao-Winkler \cite{TW20}  obtained the global classical solvability for arbitrary   $\chi>0$ and $(d,\lambda)=(2,1)$    under the smallness conditions on $\|u_0\|_{L^{2}(\Omega)}$, $\|\nabla \sqrt{v_0}\|_{L^{2}(\Omega)}$, $\|\varphi\|_{L^{\infty}(0,\infty;L^{2}(\Omega))}$, and $\|\nabla \sqrt{\psi}\|_{L^{\infty}(0,\infty;L^{2}(\Omega))}$. For a  generalized solution concept, Winkler \cite{W19} obtained the global existence of   radial   renormalized solution for    $\chi>0$, $d=2$, and $\lambda=1$.  
  As to long-time asymptotics of (UC), Shen-Li \cite{SL19} obtained   the asymptotic stability of constant steady states for $\chi<\frac{2}{d}$, $d\ge2$, $\lambda=1$ under the assumption that $\varphi\ge0$ and $\psi>0$ are spatial-temporal constants with certain smallness. Moreover, the long-time convergence results    $(u,v)\displaystyle\mathop{\rightarrow}(0,v_{\infty})$ in an appropriate sense have been obtained in   Rodr\'iguez-Winkler \cite{RW19}, Winkler \cite{W19},  and  Tao-Winkler \cite{TW20}, where $v_{\infty}$ denotes the solution to Neumann problem $-\Delta v_{\infty}+v_{\infty}=\psi_{\infty}$ with $\psi_{\infty}(x)=\displaystyle\lim_{t\rightarrow\infty}\psi(x,t)$.
 
As we mentioned above, global existence results for (KS) or (UC) with general $\chi>0$ are available only for certain generalized solution concepts \cite{LW17,SW11,W19},  or restricted to either $d=1$ \cite{RW19,WWF14} or  $d=2$ with small data \cite{TW20}.  We also note that   
the conditions on  $\chi$  for global classical solvability   in  \eqref{MODEL0} such as  \eqref{THS1}  were made when certain energy estimates were obtained.   
For example,   in \cite{F18,SL19,W11}, the condition \eqref{THS1}  is used to control
$\int_{\Omega}u^{p}v^{-q}$
with some $p>\frac{d}{2}$.

In this paper,  
we  develop a new approach  different from energy methods. Our main tool is the maximum principle, which is motivated and  modified  from those  used in Yang-Chen-Liu \cite[Theorem 2.3]{YCL01} and Winkler \cite{W14}.   
 More precisely, after   transforming the system   \eqref{MODEL0} into the single   equation for $
z=\frac{u}{v^{1-\lambda}}+(\lambda+\frac{\chi}{2})|\nabla\log v|^{2}$, 
\[
\partial_{t}z-\Delta z-2\nabla z\cdot\nabla \log v +F(u,v,\psi)
=G(u,v,\varphi,\psi)
\]
with \[
F(u,v,\psi)=(1-\lambda) \bke{\frac{u}{v^{1-\lambda}}}^{2}
 +\sigma  v\bke{ \frac{u}{v^{1-\lambda}} }+(1-\lambda)(\lambda+\chi)\bke{ \frac{u}{v^{1-\lambda}} }|\nabla \log v|^{2}
 \]
 \[+(2\lambda+ \chi )|D^{2}\log v|^{2}+ (1-\lambda)  \bke{ \frac{u}{v^{1-\lambda}} } \frac{\psi }{v}+(2\lambda+ \chi )\frac{\psi }{v}|\nabla \log v|^{2},
 \] and 
\[G(u,v,\varphi,\psi)=
-\chi\bke{\frac{u}{v^{1-\lambda}}}\Delta \log v+(1-\lambda) \bke{ \frac{u}{v^{1-\lambda}} }+\frac{\varphi }{v^{1-\lambda}} +\frac{(2\lambda+ \chi )}{2v}\nabla \log v\cdot \nabla \psi,
\]
we  control the right-hand side $G(u,v,\varphi,\psi)$ by using the terms in  $F(u,v,\psi)$, and    apply the  maximum principle argument. Here, the     challenging term in $ G(u,v,\varphi,\psi)$ is 
\[
-\chi\bke{\frac{u}{v^{1-\lambda}}}\Delta \log v
\]
but as we will show in \eqref{UVEQ3}--\eqref{UVEQ4},
 we can control this term if $\chi\le \chi_{d,\lambda}$ with $\chi_{d,\lambda}$ defined in \eqref{CHIDLAM} (Theorem~\ref{THM1}).
 In particular, our global solvability result covers (KS) with $\chi=\chi_{2,0}=2$, $d=2$ for large data. In the case of (UC)  with $\chi=2>0=\chi_{2,1}$, $d=2$, $\lambda=1$, our global solvability  result requires $\sigma>0$ and additional smallness conditions on $u_{0}$, $\nabla \log v_{0}$, $\varphi$, and $\nabla \sqrt{\psi}$.
 
  The first   goal of this paper is  the global well-posedness result for large data.
The  initial data $u_{0}$, $v_{0}$, and given functions $\varphi$, $\psi$ of  \eqref{MODEL0} are supposed to  satisfy the following:
\begin{assumption}\label{MODEL012} $u_{0}$, $v_{0}$,  $\varphi$, and $\psi$ are  all  non-negative and  
\[
\displaystyle (u_{0},v_{0})\!\in\! \bke{\mathcal{C}^{2+\alpha}(\overline{\Omega})}^{2} \mbox{ for some }\alpha\in(0,1),\qquad \min_{\overline{\Omega}} v_{0}>0, 
\]
\[
\frac{\partial u_{0}}{\partial \nu}=\frac{\partial v_{0}}{\partial \nu}=0,\qquad x\in\partial\Omega,
\]
\[
    \varphi \! \in\!  \mathcal{C}^{1}(\overline{\Omega}\times[0,\infty))\!\cap\! L^{\infty}(\Omega\times(0,\infty)),\quad
  \psi  \! \in\!  \mathcal{C}^{2}(\overline{\Omega}\times[0,\infty))\!\cap\! L^{\infty}((0,\infty);W^{1,\infty}(\Omega)).\]
\end{assumption}

Although  $\lambda=0$ or $1$ is the most interesting case, we consider $ \lambda\in[0,1]$ to see 
how $\lambda$ affects the threshold of $\chi$.
 For convenience, we denote 
 \begin{equation}\label{CHIDLAM}
 \chi_{d,\lambda}:=\frac{2}{d}\bke{  1-\lambda +\sqrt{  2d\lambda(1-\lambda)+(1-\lambda)^2   }            }.
 \end{equation}
Our first main result, which  is for large data and $\lambda\in[0,1)$,    reads as follows:  
 \begin{theorem}\label{THM1}
Let $\Omega$ be a smooth, bounded and convex domain of $\R^{d}$, $d\ge2$. Suppose that $(u_0,v_0,\varphi,\psi)$ satisfies Assumption~\ref{MODEL012}, and let $\sigma\ge0$, $0\le\lambda<1$. If  
$
\chi\le \chi_{d,\lambda}$,
then  \eqref{MODEL0}  possess a unique  non-negative smooth solution $(u,v)$ globally in time in the class
\begin{equation}\label{THM1_1}
\begin{array}{ll}
u,v\in    \bke{\mathcal{C}^{2+\alpha,1+\frac{\alpha}{2}}(\overline{\Omega}\times[0,\infty))}^{2}.
\end{array}
\end{equation}
Moreover, if we  further assume that  $\chi< \chi_{d,\lambda}$   and  one of the followings:
\begin{equation}\label{THM1ASS1}
 \left.
\begin{array}{ll}
\bullet&  \sigma>0, \,\,\displaystyle\inf_{t\ge0}  \int_{\Omega}\psi(\cdot,t)>0, \\  
\bullet& (\varphi(x,t),\psi(x,t))=(0,b(t)) \quad \mbox{ for all }\,\, x\in \Omega,\, t>0,
\end{array}
\right. 
\end{equation}
 then 
 there exists a constant $C>0$ independent of $t$ such that
\begin{equation}\label{THM1_2}
\norm{u(\cdot,t)}_{ L^{\infty}(\Omega) }+\norm{  v(\cdot,t)}_{ L^{\infty}(\Omega) }+\norm{\nabla \log v(\cdot,t)}_{  L^{\infty}(\Omega) }\le C\quad\mbox{ for all } t>0.
\end{equation}
\end{theorem}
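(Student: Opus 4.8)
The plan is to produce a priori bounds that, through the standard extensibility criterion for \eqref{MODEL0}, promote a local classical solution to a global one. First I would record the local theory: under Assumption~\ref{MODEL012}, Amann-type parabolic fixed-point theory yields a unique maximal classical solution $(u,v)$ on $[0,T_{\max})$ enjoying the regularity in \eqref{THM1_1}, together with $u\ge0$ and $v>0$ on $[0,T_{\max})$ (the positivity of $v$ from the comparison principle for $\partial_t v-\Delta v+v=uv^{\lambda}+\psi\ge0$, which also gives a time-local lower bound $v\ge e^{-t}\min_{\overline\Omega}v_0$), and the criterion that if $T_{\max}<\infty$ then $\limsup_{t\uparrow T_{\max}}\bke{\norm{u(\cdot,t)}_{L^{\infty}}+\norm{v(\cdot,t)}_{L^{\infty}}+\norm{\nabla\log v(\cdot,t)}_{L^{\infty}}}=\infty$. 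It therefore suffices to bound these three quantities on every finite time interval.

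The whole argument then rests on the scalar quantity $z=\frac{u}{v^{1-\lambda}}+\bke{\lambda+\frac{\chi}{2}}\abs{\nabla\log v}^{2}$ and its equation $\partial_t z-\Delta z-2\nabla z\cdot\nabla\log v=G-F$ stated in the introduction. Writing $w:=u/v^{1-\lambda}\ge0$, the heart of the matter is to show that $F-G$ is nonnegative up to controllable lower-order terms. The only piece of $G$ with neither a definite sign nor a lower-order character is $-\chi w\,\Delta\log v$; I would dominate it by $\chi\sqrt{d}\,w\,\abs{D^2\log v}$ using the elementary trace bound $(\Delta\log v)^2\le d\,\abs{D^2\log v}^2$, and split it by Young's inequality against the two coercive terms $(1-\lambda)w^2$ and $(2\lambda+\chi)\abs{D^2\log v}^2$ already present in $F$. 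The splitting is feasible exactly when $\frac{d\chi^2}{4(1-\lambda)}\le 2\lambda+\chi$, i.e. when $d\chi^2-4(1-\lambda)\chi-8\lambda(1-\lambda)\le0$, and solving this quadratic in $\chi$ returns precisely the threshold $\chi\le\chi_{d,\lambda}$ of \eqref{CHIDLAM}. The remaining pieces of $G$ — the linear term $(1-\lambda)w$, the forcing $\varphi v^{-(1-\lambda)}$, and $\frac{2\lambda+\chi}{2v}\nabla\log v\cdot\nabla\psi$ — are genuinely lower order and are absorbed by small fractions of $w^2$, $\abs{D^2\log v}^2$ and $\abs{\nabla\log v}^2$ at the cost of an additive constant depending on $\norm{\varphi}_{L^{\infty}}$, $\norm{\psi}_{W^{1,\infty}}$ and the time-local lower bound for $v$; since every other term of $F$ ($\sigma vw$, the $wp$-terms, the $\psi$-terms) has a favorable sign, the outcome is a pointwise bound $G-F\le C_0(1+z)$.

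I would then run the maximum principle for $\partial_t-\Delta-2\nabla\log v\cdot\nabla$ as an evolution inequality for $M(t):=\max_{\overline\Omega}z(\cdot,t)$, whose drift $2\nabla\log v$ is smooth on each slab $\overline\Omega\times[0,T]$. Convexity of $\Omega$ enters at the boundary: from $\partial_\nu u=\partial_\nu v=0$ one gets $\partial_\nu w=0$, while the well-known inequality $\partial_\nu\abs{\nabla\log v}^{2}\le0$, valid on convex domains when $\partial_\nu\log v=0$, gives $\partial_\nu z\le0$; by Hopf's lemma the spatial maximum of $z$ is attained at an interior point $x_t$. There $\nabla z=0$ and $\Delta z\le0$, so by Hamilton's trick $\frac{d}{dt}M(t)\le(G-F)(x_t,t)\le C_0(1+M(t))$, and Gronwall bounds $M$, hence $w$ and $\abs{\nabla\log v}$, on every finite interval. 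Feeding $u\le z\,v^{1-\lambda}$ into the $v$-equation closes the time-local bound on $\norm{v}_{L^{\infty}}$; all three quantities are thus finite up to any $T<\infty$, so $T_{\max}=\infty$ and \eqref{THM1_1} follows by Schauder bootstrapping.

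Finally, for the uniform-in-time estimate \eqref{THM1_2} I would use the strict inequality $\chi<\chi_{d,\lambda}$, which leaves a genuine coercive surplus $\delta\,w^2+\delta'\abs{D^2\log v}^2$ after the absorption, together with the hypotheses \eqref{THM1ASS1}, which are designed to furnish a time-independent positive lower bound for $v$ (so that $\varphi v^{-(1-\lambda)}$ and the $\psi$-dependent terms are controlled uniformly). These should upgrade the differential inequality at the maximum to the dissipative form $G-F\le -c\,z+C$, yielding a time-independent bound on $z$ and hence on $\norm{u}_{L^{\infty}}$ and $\norm{\nabla\log v}_{L^{\infty}}$, after which the $v$-equation gives the matching uniform bound on $\norm{v}_{L^{\infty}}$. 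I expect the main obstacle to be twofold: performing the absorption of $-\chi w\,\Delta\log v$ sharply enough that the threshold is exactly \eqref{CHIDLAM} rather than something smaller, and controlling the growth of $\abs{\nabla\log v}^{2}$ when $w$ and $\psi$ degenerate simultaneously — for global existence this is defused by the mild bound $G-F\le C_0(1+z)$ and Gronwall, but for the uniform estimate it is precisely what forces both the strict inequality and the structural conditions \eqref{THM1ASS1}.
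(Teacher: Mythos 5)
Your first half --- the derivation of the equation for $z=\frac{u}{v^{1-\lambda}}+\bke{\lambda+\frac{\chi}{2}}\abs{\nabla\log v}^{2}$, the absorption of $-\chi w\,\Delta\log v$ (with $w=u/v^{1-\lambda}$) into $(1-\lambda)w^{2}$ and $(2\lambda+\chi)\abs{D^{2}\log v}^{2}$ producing exactly the threshold \eqref{CHIDLAM}, the boundary sign $\partial_{\nu}z\le 0$ from convexity, and the maximum-principle/Gronwall step giving finite-time bounds on $w$, $\abs{\nabla\log v}$, then $v$ by comparison --- is essentially the paper's Proposition~\ref{PROP1}(i) and is sound (one small inaccuracy: in the critical case $\chi=\chi_{d,\lambda}$ the absorption consumes \emph{all} of $(1-\lambda)w^{2}$ and $(2\lambda+\chi)\abs{D^{2}\log v}^{2}$, so the lower-order terms cannot be "absorbed by small fractions" of them; they must be kept as terms of size $(1-\lambda)z+C(T)$, which still yields your inequality $G-F\le C(T)(1+z)$).

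The genuine gap is in the uniform-in-time estimate \eqref{THM1_2}. Your mechanism --- strict inequality gives a surplus $\delta w^{2}+\delta'\abs{D^{2}\log v}^{2}$, and \eqref{THM1ASS1} gives a lower bound on $v$, hence $G-F\le -cz+C$ --- does not work, because nothing in $F$ damps $\abs{\nabla\log v}^{2}$ pointwise: the only $\abs{\nabla\log v}^{2}$ terms in $F$ carry the weights $w$ or $\psi/v$, and both may vanish at the spatial maximum of $z$ (in the first alternative of \eqref{THM1ASS1} only $\inf_t\int_{\Omega}\psi>0$ is assumed, so $\psi$ can vanish pointwise; and a Hessian surplus cannot control a gradient pointwise). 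Meanwhile $G$ contains the \emph{unweighted} term $\frac{2\lambda+\chi}{2v}\nabla\log v\cdot\nabla\psi$, which after Young leaves an un-absorbable $+\epsilon\abs{\nabla\log v}^{2}$; at a maximum point where $w$ is small this makes $G-F$ of order $+\epsilon M(t)$, so your differential inequality only gives growth, not decay. The paper's resolution (Proposition~\ref{PROP1}(ii)) is a different idea you are missing: replace $z$ by $\mathcal{Z}=z+\frac{\sigma}{2}v$, so that the transport term $-2\nabla\mathcal{Z}\cdot\nabla\log v$ generates the damping $-\sigma v\abs{\nabla\log v}^{2}$ and the reaction generates $-\frac{\sigma}{2}v$; this is the actual role of $\sigma>0$ in \eqref{THM1ASS1} (the lower bound on $v$ comes from $\inf_t\int_\Omega\psi>0$ alone, via Lemma~\ref{UNIFLOWV}), and it simultaneously yields the uniform bound on $\norm{v}_{L^{\infty}}$. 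Relatedly, your reading of the second alternative is wrong: $(\varphi,\psi)=(0,b(t))$ furnishes no lower bound on $v$ at all; it works because it annihilates $\varphi$ and $\nabla\psi$ in $G$, leaving the $w$-weighted dissipative inequality $G-F\le -[\varepsilon z-(1-\lambda)]w$. Finally, your closing claim that "the $v$-equation gives the matching uniform bound on $\norm{v}_{L^{\infty}}$" does not follow from a uniform bound on $z$: since $uv^{\lambda}=wv$ is linear in $v$, comparison only gives exponential growth; the paper needs the $\mathcal{Z}$-trick in case (ii), and in case (iii) a separate semigroup argument (Lemma~\ref{LEMTHM1-2}, using mass conservation, $L^{1}$--$L^{\infty}$ interpolation, and the sublinearity $\lambda<1$) to bound $\norm{u}_{L^{\infty}}$ and then $\norm{v}_{L^{\infty}}$.
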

\begin{remark}
According to Theorem~\ref{THM1}, {\rm (KS)}  with $
\chi\le \chi_{d,0}=\frac{4}{d}$ possess a unique  non-negative smooth solution $(u,v)$ globally in time in the class \eqref{THM1_1}. In particular, if 
the strict inequality $
\chi< \chi_{d,0}$ holds, then  there exists a constant $C>0$ satisfying
 \eqref{THM1_2}.
Thus, in the case that $2\le d\le 8$,  Theorem~\ref{THM1} improves the previous results of Winkler  \cite{W11} and   Lankeit \cite{L10} since $
\chi_{d,0}=\frac{4}{d}\ge\sqrt{ \frac{2}{d} }$ and $\chi_{2,0}=2>1.016$.   Theorem~\ref{THM1} is also an improvement of  the  result in \cite{R13} for {\rm (UC)}  with $(\lambda,d)=(0,2)$. 
\end{remark}
 
 From now on, we assume that $\psi$ is strictly positive and satisfies 
 \begin{equation}\label{ASSTHM20}
0<\eta:= \displaystyle\inf_{x\in\overline{\Omega},\\s\ge0} \psi(x,s) \le \displaystyle\min_{\overline{\Omega}} v_0.
\end{equation} 
The second   goal of this paper is to establish  the global well-posedness result for \eqref{MODEL0}  with $\sigma>0$ under certain smallness conditions.  More precisely, if either $\chi$ is large or $\sigma\eta$ is small, 
then smallness of given data implies global existence of classical solutions. In case that $\sigma\eta$ is large compared to $\chi$, it is not necessary that given data are small, but they are required to be bounded by certain numbers. This seems to be due to damping effects of parameters $\sigma$ and $\eta$. 

We first state the case $\lambda=1$.
\begin{theorem}\label{THM2}
Let $\Omega$ be a smooth, bounded and convex domain of $\R^{d}$, $d\ge2$, and let  $\sigma>0$,    $\lambda=1$. 
Suppose that $(u_0,v_0,\varphi,\psi)$ satisfies   Assumption~\ref{MODEL012} and \eqref{ASSTHM20}.
We set    
\[
\displaystyle\delta=\min\bket{\frac{1}{2},  \frac{ \sigma\eta(4+2\chi )  }{ d\chi^{2}} },\qquad \mu=\max\{ \sigma^{-1},\,    \|v_0\|_{L^{\infty}(\Omega)},\,\|\psi\|_{L^{\infty}(\Omega\times(0,\infty))}   \}.
\]
Assume further that  $(u_0,v_0,\varphi,\psi)$    satisfies 
\begin{equation}\label{ASSTHM21}
\Bigr{\|}u_{0}+(1+\frac{\chi}{2})|\nabla \log v_{0}|^{2}\Bigr{\|}_{L^{\infty}(\Omega)}<\delta,
\end{equation}
and
\begin{equation}\label{ASSTHM22}
\Bigr{\|} \varphi  +\frac{4+2\chi}{\eta } |\nabla \sqrt{\psi}|^{2} \Bigr{\|}_{L^{\infty}(\Omega\times(0,\infty))}  <\frac{\eta\delta }{2\mu}.
\end{equation}
Then, \eqref{MODEL0}  possess a unique  non-negative smooth solution $(u,v)$ globally in time in the class \eqref{THM1_1}. Moreover, there exists a constant $C>0$ independent of $t$ satisfying \eqref{THM1_2}. 
\end{theorem}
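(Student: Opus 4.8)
The plan is to run a maximum-principle argument on the scalar quantity $z=u+(1+\frac{\chi}{2})|\nabla\log v|^2$, the $\lambda=1$ specialization of the transformed unknown, exactly along the lines of the proof of Theorem~\ref{THM1}, but with the smallness hypotheses \eqref{ASSTHM21}--\eqref{ASSTHM22} taking over the role that the structural bound $\chi\le\chi_{d,\lambda}$ plays there. Observe that for $\lambda=1$ one has $\chi_{d,1}=0$, so $\chi\le\chi_{d,\lambda}$ is vacuous for $\chi>0$; the necessary damping must instead be supplied by the reaction term $\sigma vu$ appearing in $F$ together with the smallness of the data, which is precisely why $\sigma>0$ and the conditions \eqref{ASSTHM21}--\eqref{ASSTHM22} are imposed here.

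First I would invoke the standard local existence, uniqueness, nonnegativity and regularity theory to obtain a classical solution $(u,v)$ on a maximal interval $[0,T_{\max})$ lying locally in the class \eqref{THM1_1} with $u\ge0$, together with the extensibility criterion that $T_{\max}<\infty$ forces $\|u(\cdot,t)\|_{L^\infty(\Omega)}+\|v(\cdot,t)\|_{W^{1,\infty}(\Omega)}$ to blow up as $t\uparrow T_{\max}$. I would then record two elementary barrier bounds on $v$ via the comparison principle for $\partial_t-\Delta+1$: comparing $v$ with the constant $\eta$ and using $\psi\ge\eta$, $uv\ge0$ in the $v$-equation gives $v\ge\eta$ throughout $[0,T_{\max})$; and on any subinterval where $u\le\frac12$ holds, comparing $v$ with the constant $2\mu$ (using $-v+uv\le-\frac12 v$ and $\psi\le\mu$) gives $v\le2\mu$ there. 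With $\lambda=1$ the transformed equation reads $\partial_t z-\Delta z-2\nabla z\cdot\nabla\log v+F=G$, where $F=\sigma vu+(2+\chi)|D^2\log v|^2+(2+\chi)\frac{\psi}{v}|\nabla\log v|^2$ and $G=-\chi u\Delta\log v+\varphi+\frac{2+\chi}{2v}\nabla\log v\cdot\nabla\psi$.

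The heart of the argument is to show $z<\delta$ on $[0,T_{\max})$. Since \eqref{ASSTHM21} gives $z(\cdot,0)<\delta$, I set $t_0=\sup\{t:\ z<\delta\ \text{on}\ [0,t)\}$ and argue that $t_0<T_{\max}$ is impossible. At a first touching point $(x_0,t_0)$ one has $\nabla z=0$, $\Delta z\le0$ and $\partial_t z\ge0$, whence $0\le\partial_t z\le G-F$, so it suffices to prove $F-G>0$ there. To this end I would: (i) use $(\Delta\log v)^2\le d\,|D^2\log v|^2$ and Young's inequality to absorb the dangerous term $-\chi u\Delta\log v$ into $(2+\chi)|D^2\log v|^2$ at the cost of $-\frac{d\chi^2}{4(2+\chi)}u^2$; (ii) write $\nabla\psi=2\sqrt\psi\,\nabla\sqrt\psi$ and use Young's inequality to bound $\frac{2+\chi}{2v}\nabla\log v\cdot\nabla\psi$ by $\frac{(2+\chi)\psi}{2v}|\nabla\log v|^2+\frac{2+\chi}{2v}|\nabla\sqrt\psi|^2$, so that half of the $\frac{\psi}{v}|\nabla\log v|^2$ term in $F$ survives; and (iii) invoke the choice of $\delta$, which forces $\frac{d\chi^2}{4(2+\chi)}u\le\frac{\sigma\eta}{2}$ whenever $u\le\delta$, giving $\sigma vu-\frac{d\chi^2}{4(2+\chi)}u^2\ge\sigma\eta u-\frac{d\chi^2}{4(2+\chi)}u^2\ge\frac{\sigma\eta}{2}u$. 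Combining these with $v\ge\eta$, with $v\le2\mu$ (valid up to $t_0$ since $z<\delta\le\frac12$ there), with $\psi\ge\eta$, and with $\sigma\ge\mu^{-1}$, the surviving positive contributions collapse at the touching point to $\frac{\eta}{2\mu}\bigl(u+(1+\frac{\chi}{2})|\nabla\log v|^2\bigr)=\frac{\eta}{2\mu}z=\frac{\eta\delta}{2\mu}$, so that $F-G\ge\frac{\eta\delta}{2\mu}-\bigl(\varphi+\frac{2+\chi}{2\eta}|\nabla\sqrt\psi|^2\bigr)>0$ by \eqref{ASSTHM22}, a contradiction.

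The main obstacle I anticipate is twofold. The most delicate point is the behaviour of $z$ on $\partial\Omega$: since $z$ contains $|\nabla\log v|^2$, a boundary maximum requires the Hopf-type information $\partial_\nu|\nabla\log v|^2\le0$, and this is exactly where the convexity of $\Omega$ enters, through nonnegativity of the second fundamental form together with $\partial_\nu\log v=0$; this reduces the touching analysis to the interior case treated above. The second, more bookkeeping-type difficulty is to keep the barrier bound $v\le2\mu$ and the pointwise bound $u\le\delta\le\frac12$ mutually consistent on $[0,t_0]$ without circularity; this is resolved by noting that $v\le2\mu$ on $[0,t_0)$ follows from $z<\delta$ there and extends to $t_0$ by continuity, which is precisely the regime in which the touching estimate is invoked. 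Once $z<\delta$ is established on $[0,T_{\max})$, it yields $\|u\|_{L^\infty(\Omega)}<\delta$ and $\|\nabla\log v\|_{L^\infty(\Omega)}<(\frac{2\delta}{2+\chi})^{1/2}$ uniformly in time, hence $v\le2\mu$ and $\|\nabla v\|_{L^\infty(\Omega)}\le\|v\|_{L^\infty(\Omega)}\|\nabla\log v\|_{L^\infty(\Omega)}$ are uniformly bounded; the extensibility criterion then forces $T_{\max}=\infty$, and the same bounds deliver \eqref{THM1_2} for all $t>0$.
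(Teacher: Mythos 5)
Your proposal is correct and follows essentially the same route as the paper's own proof of Theorem~\ref{THM2}: the same quantity $z=u+(1+\frac{\chi}{2})|\nabla\log v|^{2}$, the same barrier bounds $\eta\le v\le 2\mu$, the same first-touching-point maximum-principle argument with identical Young absorptions of $-\chi u\Delta\log v$ and of the $\nabla\psi$ term, and the same Hopf/convexity treatment of the boundary. The only slip is the coefficient of the $\nabla\log v\cdot\nabla\psi$ term in $G$, which in the actual $z$-equation \eqref{UVEQ2} is $\tfrac{2+\chi}{v}$ rather than $\tfrac{2+\chi}{2v}$ (you copied a typo from the paper's introduction); redoing your step (ii) with the correct factor costs $\tfrac{4+2\chi}{\eta}|\nabla\sqrt{\psi}|^{2}$, which is exactly what \eqref{ASSTHM22} budgets, so the argument closes unchanged.
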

\begin{remark}
For   $d\ge2$, $\sigma>0$, and $\lambda=1$, a quadruple $(u_{0}, v_{0}, \varphi, \psi)$ satisfies  \eqref{ASSTHM20}--\eqref{ASSTHM22} if  it is   sufficiently close to a constant vector $(0,a,0,b)$  with  $a\ge b>0$. 
\end{remark} 

The proof of Theorem~\ref{THM2} is also valid with some modifications to the case $\lambda\in[0,1)$, which seems to be of independent interest, because it shows how $\lambda$ affects values of $\delta$, $\mu$, $\sigma$, and $\eta$.
 Since the global well-posedness for $
\chi\le \chi_{d,\lambda}$ with general large data is resolved   by Theorem~\ref{THM1},
we only  treat  the case $
\chi> \chi_{d,\lambda}$.

\begin{theorem}\label{THM2.5}
Let $\Omega$ be a smooth, bounded and convex domain of $\R^{d}$, $d\ge2$, and let $\sigma>0$,  $0\le\lambda<1$, and $
\chi> \chi_{d,\lambda}$  with $\chi_{d,\lambda}$  defined in \eqref{CHIDLAM}. 
Suppose that $(u_0,v_0,\varphi,\psi)$ satisfies   Assumption~\ref{MODEL012} and \eqref{ASSTHM20}. We set \[
 \delta_{0}=\displaystyle\min\bket{\frac{1}{2},\,\,  \frac{\sigma \eta}{2}\Bigr{(}  \frac{d\chi^2}{8\lambda+4\chi}-(1-\lambda)    \Bigr{)}^{-1} },
 \]
 \[ 
 \mu_{0}=\max\{ 2\sigma^{-1},\,    \|v_0\|_{L^{\infty}(\Omega)},\,\|\psi\|_{L^{\infty}(\Omega\times(0,\infty))}   \}.
 \] 
 Assume further that
    $(u_0,v_0,\varphi,\psi)$   satisfies 
 \[
 \sigma \eta \ge 4(1-\lambda),
 \]
\begin{equation}\label{THM3_2}
\Bigr{\|}u_{0}+(\lambda+\frac{\chi}{2})|\nabla \log v_{0}|^{2}\Bigr{\|}_{L^{\infty}(\Omega)}<\delta_{0},
\end{equation}
and
\begin{equation}\label{THM3_3}
\Bigr{\|} \frac{\varphi }{\eta^{1-\lambda}} + \frac{4\lambda+2\chi}{\eta } |\nabla \sqrt{\psi}|^{2} \Bigr{\|}_{L^{\infty}(\Omega\times(0,\infty))}   < \frac{\eta\delta_{0} }{2 \mu_{0}}.
\end{equation}
Then, \eqref{MODEL0}  possess a unique  non-negative smooth solution $(u,v)$ globally in time in the class \eqref{THM1_1}. Moreover, there exists a constant $C>0$ independent of $t$ satisfying \eqref{THM1_2}.   
\end{theorem}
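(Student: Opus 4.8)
The plan is to reduce the whole statement to a single pointwise bound $z\le\delta_0$ on the quantity $z=\frac{u}{v^{1-\lambda}}+\bke{\lambda+\frac{\chi}{2}}\abs{\nabla\log v}^2$ introduced before Theorem~\ref{THM1}, obtained by the maximum principle for its evolution equation $\partial_t z-\Delta z-2\nabla z\cdot\nabla\log v+F=G$. First I would record the standard facts: Assumption~\ref{MODEL012} yields, by Amann-type quasilinear parabolic theory, a unique classical solution on a maximal interval $[0,T_{\max})$ in the class \eqref{THM1_1}, with the extensibility criterion that $T_{\max}<\infty$ forces $\norm{u(\cdot,t)}_{L^\infty}+\norm{v(\cdot,t)}_{L^\infty}+\norm{\nabla\log v(\cdot,t)}_{L^\infty}\to\infty$ as $t\uparrow T_{\max}$. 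Because $\psi\ge\eta$, $v_0\ge\eta$ and $uv^\lambda\ge0$, the right-hand side of the $v$-equation dominates $\Delta v-v+\eta$, so comparison gives the lower bound $v\ge\eta$ throughout; this legitimizes $\nabla\log v$, $\abs{D^2\log v}$ and $\frac{u}{v^{1-\lambda}}$ as smooth quantities and lets me replace $\frac1v$ by $\frac1\eta$ from below. Once $z\le\delta_0$ is established, $w:=\frac{u}{v^{1-\lambda}}\le z\le\delta_0$, $\abs{\nabla\log v}^2\le\frac{2z}{2\lambda+\chi}$ and the upper bound $v\le2\mu_0$ proved below give the bounds in \eqref{THM1_2}, contradicting the extensibility criterion and forcing $T_{\max}=\infty$; uniqueness is inherited from the local theory.

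The heart is a continuity argument on $M(t):=\max_{\overline\Omega}z(\cdot,t)$. The smallness hypothesis \eqref{THM3_2} is designed so that $M(0)<\delta_0$; set $T^*=\sup\bket{t\in[0,T_{\max}):M(s)<\delta_0\ \text{for all}\ s\le t}>0$, and suppose for contradiction $T^*<T_{\max}$. On $[0,T^*)$ one has $w\le z<\delta_0\le\frac12$, hence in the $v$-equation $\partial_t v=\Delta v+(w-1)v+\psi\le\Delta v-\frac12 v+\norm{\psi}_{L^\infty}$, so comparison with the constant supersolution yields $v\le\max\bket{\norm{v_0}_{L^\infty},2\norm{\psi}_{L^\infty}}\le2\mu_0$ on $[0,T^*)$. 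It now suffices to show $\frac{d}{dt}M(t)<0$ whenever $M(t)=\delta_0$, since this prevents $M$ from reaching $\delta_0$ and contradicts the definition of $T^*$. At a point $x_0$ where $z(\cdot,t)$ attains its maximum, $\nabla z=0$ and $\Delta z\le0$ (the convexity of $\Omega$ gives $\frac{\partial}{\partial\nu}\abs{\nabla\log v}^2\le0$ on $\partial\Omega$ under the Neumann condition, which lets me treat the maximum as if interior), whence $\frac{d}{dt}M(t)\le(G-F)(x_0,t)$.

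Proving $(G-F)(x_0,t)<0$ at $z=\delta_0$ is the main obstacle, and it is here that every hypothesis is used at once. Writing $w=\frac{u}{v^{1-\lambda}}$, $p=\abs{\nabla\log v}^2$, the only sign-indefinite term is handled by $-\chi w\Delta\log v\le\chi\sqrt d\,w\abs{D^2\log v}$ (from $\abs{\Delta\log v}\le\sqrt d\,\abs{D^2\log v}$) and Young's inequality, which absorbs it into the good term $(2\lambda+\chi)\abs{D^2\log v}^2$ of $F$ and leaves $\frac{d\chi^2}{4(2\lambda+\chi)}w^2$; together with $-(1-\lambda)w^2$ this gives net coefficient $A:=\frac{d\chi^2}{4(2\lambda+\chi)}-(1-\lambda)$, which is positive exactly because $\chi>\chi_{d,\lambda}$ --- the same algebra that defines \eqref{CHIDLAM}. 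Since absorption is no longer possible, the damping must defeat this quadratic: using $w\le\delta_0$, $\delta_0\le\frac{\sigma\eta}{2A}$, $\sigma\eta\ge4(1-\lambda)$ and the term $-\sigma vw\le-\sigma\eta w$ of $F$, I get $Aw^2-\bke{\sigma\eta-(1-\lambda)}w\le\bke{A\delta_0-\sigma\eta+(1-\lambda)}w\le-\frac{\sigma\eta}{4}w$. For the $\psi$-forcing I write $\nabla\psi=2\sqrt\psi\,\nabla\sqrt\psi$ and apply Young against the good term $(2\lambda+\chi)\frac{\psi}{v}p$ of $F$, using $v\ge\eta$ to bound $\frac{\varphi}{v^{1-\lambda}}+\frac{2\lambda+\chi}{2v}\nabla\log v\cdot\nabla\psi$ by half of that good term plus the quantity appearing in \eqref{THM3_3}. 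Keeping the remaining half and using $v\le2\mu_0$ and $\mu_0\ge2\sigma^{-1}$, the damping in $w$ and in $p$ recombine into $z$:
\begin{equation*}
(G-F)(x_0,t)\le-\frac{\eta}{2\mu_0}\,z(x_0,t)+\Bigl\|\frac{\varphi}{\eta^{1-\lambda}}+\frac{4\lambda+2\chi}{\eta}\,\abs{\nabla\sqrt\psi}^2\Bigr\|_{L^\infty(\Omega\times(0,\infty))},
\end{equation*}
where I used $\frac{\sigma\eta}{4}w\ge\frac{\eta}{2\mu_0}w$ and $\frac{(2\lambda+\chi)\eta}{4\mu_0}p=\frac{\eta}{2\mu_0}\bke{\lambda+\frac\chi2}p$ to reconstitute $z=w+\bke{\lambda+\frac\chi2}p$. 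By \eqref{THM3_3} the last norm is strictly below $\frac{\eta\delta_0}{2\mu_0}$, so at $z=\delta_0$ the right-hand side is strictly negative, which is exactly what the continuity argument needs.

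The genuinely delicate point is the interlocking of the hypotheses: the threshold structure forces $A>0$, so the quadratic cannot be absorbed and must instead be dominated by the $\sigma\eta$-damping (whence the constraints $\delta_0\le\frac{\sigma\eta}{2A}$ and $\sigma\eta\ge4(1-\lambda)$), while the bound $v\le2\mu_0$ that converts $\frac{\psi}{v}$ into effective $p$-damping is itself only available because $z<\delta_0$ keeps $w<\frac12$ --- so the estimate and the region it controls are coupled and must be closed together inside the continuity argument rather than separately. Once this is done, $z\le\delta_0$ holds on $[0,T_{\max})$ and the first paragraph completes the proof.
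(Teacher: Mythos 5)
Your proof is correct and follows essentially the same route as the paper: the same transformed quantity $z=\frac{u}{v^{1-\lambda}}+(\lambda+\frac{\chi}{2})|\nabla\log v|^{2}$ with $\frac{\partial z}{\partial \nu}\le 0$ from convexity, the same Young-inequality absorptions producing the coefficient $\frac{d\chi^{2}}{8\lambda+4\chi}-(1-\lambda)$ and the $|\nabla\sqrt{\psi}|^{2}$ forcing, and the same maximum-principle closure at the level $\delta_{0}$ using $v\ge\eta$, $v\le 2\mu_{0}$, $\sigma\eta\ge 4(1-\lambda)$, and $\mu_{0}\ge 2\sigma^{-1}$ to recombine the damping into $\frac{\eta}{2\mu_{0}}z$. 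The only cosmetic differences are that you phrase the contradiction through $M(t)=\max_{\overline\Omega}z(\cdot,t)$ rather than a first touching point $(x_{\delta},t_{\delta})$, and you derive $v\le 2\mu_{0}$ directly by comparison from $\frac{u}{v^{1-\lambda}}\le\frac12$ (via $uv^{\lambda}=\frac{u}{v^{1-\lambda}}\,v$) instead of citing Lemma~\ref{VLEM} --- in fact a slightly cleaner step, since Lemma~\ref{VLEM} is formulated under the hypothesis $u\le\frac12$.
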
 

Finally, we state the long time behavior result for $\sigma>0$.
 \begin{theorem}\label{THM3}
Let $\Omega$ be a smooth, bounded and convex domain of $\R^{d}$, $d\ge2$, and let $\sigma>0$, $0\le \lambda \le1$. Suppose that $(u, v)$ is a unique  global classical solution to  \eqref{MODEL0}
satisfying \eqref{THM1_2}. If 
\begin{equation}\label{LOWV2}
   \eta_{0} \le \min_{\overline{\Omega}}  v(\cdot, t) \quad\mbox{ for all } t>0, 
 \end{equation}
where $\eta_{0}$ is a positive constant independent of $t$, and 
 \begin{equation}\label{ASSPHIPSI}
 \int_{0}^{\infty}\int_{\Omega}\varphi(x,t)dxdt+
 \int_{0}^{\infty}\int_{\Omega}|\psi(x,t)-\psi_{\infty}(x)|^{2}dxdt<\infty
  \end{equation}
for some $\psi_{\infty}\in \mathcal{C}(\overline{\Omega})$ with $\displaystyle\min_{\overline{\Omega}} \psi_{\infty} >0$, then $u$ and $v$ satisfy
\[
\|u(\cdot,t)\|_{L^{\infty}(\Omega)}\rightarrow 0,\qquad \|v(\cdot,t)-v_{\infty}\|_{L^{\infty}(\Omega)}\rightarrow 0\qquad \mbox{ as }  t\rightarrow\infty,
\]
where $ v_{\infty}$ denotes the solution for  
\begin{equation}\label{VINFMODEL}
 \left\{
\begin{array}{ll}
v_{\infty}-\Delta v_{\infty}=\psi_{\infty}, \qquad &x\in\Omega,\\
\frac{\partial v_{\infty}}{\partial \nu}=0,   &x\in\partial \Omega.
\end{array}
\right.
\end{equation}
 \end{theorem}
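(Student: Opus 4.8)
The plan is to prove the two stated limits in four stages: first $L^1$-decay of $u$, then its upgrade to $L^\infty$-decay, then $L^2$-decay of $v-v_\infty$, and finally its upgrade to $L^\infty$-decay, using the uniform bounds \eqref{THM1_2} and the lower bound \eqref{LOWV2} as the quantitative backbone. For the first stage I would integrate the $u$-equation of \eqref{MODEL0} over $\Omega$; by the no-flux conditions $\partial_\nu u=\partial_\nu v=0$ the diffusion and chemotactic fluxes vanish, so with $m(t):=\int_\Omega u(\cdot,t)$ and $g(t):=\int_\Omega\varphi(\cdot,t)$,
\[
m'(t)=-\sigma\int_\Omega uv+\int_\Omega\varphi\le-\sigma\eta_0\, m(t)+g(t).
\]
Since $\sigma,\eta_0>0$, $g\ge0$, and $\int_0^\infty g<\infty$ by \eqref{ASSPHIPSI}, Duhamel's formula together with splitting the convolution $\int_0^t e^{-\sigma\eta_0(t-s)}g(s)\,ds$ at $s=t/2$ yields $m(t)\to0$, i.e.\ $\|u(\cdot,t)\|_{L^1(\Omega)}\to0$.

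Next I would write the $u$-equation in divergence form $\partial_t u=\nabla\cdot(\nabla u-\chi u\nabla\log v)-\sigma uv+\varphi$, so that by \eqref{THM1_2} its drift $\chi\nabla\log v$ and its bounded lower-order data are controlled in $L^\infty$ uniformly in $t$. Parabolic regularity (De Giorgi--Nash--Moser, with the Neumann condition on the smooth convex $\Omega$) then furnishes $\beta\in(0,1)$ and $K>0$ with $\sup_{t\ge1}\|u(\cdot,t)\|_{C^\beta(\overline\Omega)}\le K$. Combining this equicontinuity with the $L^1$-decay through an interpolation of the form $\|f\|_{L^\infty}\le C\|f\|_{C^\beta}^{a}\|f\|_{L^1}^{1-a}$ (equivalently, a ball argument at a maximum point) gives $\|u(\cdot,t)\|_{L^\infty(\Omega)}\to0$, the first claimed limit.

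For $v$ I would set $w:=v-v_\infty$ and subtract \eqref{VINFMODEL} from the $v$-equation to obtain $\partial_t w=\Delta w-w+uv^\lambda+(\psi-\psi_\infty)$, which still carries the homogeneous Neumann condition. Testing with $w$, discarding $-\int_\Omega|\nabla w|^2$, and applying Young's inequality to the two forcing terms gives
\[
\frac{d}{dt}\int_\Omega w^2\le-\int_\Omega w^2+2\|uv^\lambda\|_{L^2(\Omega)}^2+2\|\psi-\psi_\infty\|_{L^2(\Omega)}^2=:-\int_\Omega w^2+f(t).
\]
Here $\|uv^\lambda\|_{L^2}^2\to0$ by the $L^\infty$-decay of $u$ and the bounds \eqref{THM1_2}, while $\int_0^\infty\|\psi-\psi_\infty\|_{L^2}^2<\infty$ by \eqref{ASSPHIPSI}; thus $f$ is the sum of a function tending to $0$ and an $L^1(0,\infty)$ function, and the Duhamel-plus-splitting argument of the first stage yields $\|w(\cdot,t)\|_{L^2(\Omega)}\to0$. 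Finally, since $\nabla v=v\nabla\log v$ is bounded in $L^\infty$ uniformly in $t$ by \eqref{THM1_2} and $v_\infty$ is fixed and smooth, $w$ is uniformly Lipschitz; interpolating as before upgrades the $L^2$-decay to $\|v(\cdot,t)-v_\infty\|_{L^\infty(\Omega)}\to0$.

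I expect the main obstacle to be the upgrade in the second stage: passing from integral decay to uniform decay of $u$ needs uniform-in-time Hölder (equicontinuity) estimates, which are not provided by the merely local-in-time regularity class \eqref{THM1_1} and must instead be extracted from the uniform bounds \eqref{THM1_2} by parabolic regularity applied on sliding unit time-windows. By contrast, for $v$ the corresponding uniform Lipschitz bound is immediate from the gradient control in \eqref{THM1_2}, so that analogous final upgrade is routine. A secondary point worth care is that $\psi-\psi_\infty$ is only assumed square-integrable in time and need not decay pointwise, so the convergence of $w$ must be driven through the $L^1(0,\infty)$-in-time character of $f$ rather than through any pointwise smallness of the forcing.
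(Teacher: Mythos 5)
Your proposal is correct, and its skeleton matches the paper's: $L^1$ decay of $u$ from integrating the $u$-equation against the lower bound \eqref{LOWV2}, an energy (testing) estimate giving $L^2$ decay of $v-v_\infty$ driven by the $L^\infty$ decay of $u$ and the $L^1$-in-time character of $\|\psi-\psi_\infty\|_{L^2}^2$, and an upgrade of each integral decay to $L^\infty$ decay by interpolating against a uniform-in-time regularity bound. The genuine difference lies in the source of that regularity bound. The paper proves a dedicated lemma (Lemma~\ref{LEM51}) establishing $\|A^\beta u(\cdot,t)\|_{L^r}+\|A^\beta v(\cdot,t)\|_{L^r}\le C$ uniformly in $t$ via the variation-of-constants formula and smoothing estimates for the Neumann semigroup, and then interpolates in fractional power spaces (using $D(A^{\beta(1-\gamma)})\hookrightarrow L^\infty(\Omega)$) to obtain $\|u\|_{L^\infty}\le C\|u\|_{L^1}^{\gamma/(\gamma+r(1-\gamma))}$ and the analogous bound for $v-v_\infty$ in terms of its $L^2$ norm. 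You instead extract uniform H\"older bounds for $u$ from De Giorgi--Nash--Moser theory on sliding unit time windows and, for $v$, exploit that \eqref{THM1_2} already yields a uniform spatial Lipschitz bound through $\nabla v=v\nabla\log v$ (with $v_\infty\in W^{2,p}(\Omega)$ Lipschitz by elliptic regularity), combining each with the elementary ball-at-the-maximum interpolation between $C^\beta$ (or Lipschitz) and $L^1$ (or $L^2$). Both routes are sound. Yours is more elementary and self-contained for $v$, bypassing semigroup machinery entirely there; for $u$, however, it leans on a heavier black box, namely uniform-in-time boundary H\"older estimates for divergence-form Neumann problems with bounded drift. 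That use is legitimate: the no-flux condition $(\nabla u-\chi u\nabla\log v)\cdot\nu=0$ reduces to $\partial_\nu u=0$ because $\partial_\nu v=0$, so conormal boundary regularity applies, and the constants are $t$-independent since all coefficient norms are controlled by \eqref{THM1_2} together with the global $L^\infty$ bounds on $\varphi$ and $\psi$ from Assumption~\ref{MODEL012}. What the paper's semigroup route buys is uniformity of treatment: a single lemma handles both $u$ and $v$ with exactly the smoothing estimates the paper already uses elsewhere, avoiding any appeal to parabolic regularity theory with boundary conditions.
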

\begin{remark}
We remark that the   property \eqref{THM1_2} holds for solutions in  Theorem~\ref{THM2}, Theorem~\ref{THM2.5}, and   Theorem~\ref{THM1} with $\chi<\chi_{d,\lambda}$ and \eqref{THM1ASS1}. Thus, such solutions satisfy asymptotics  in Theorem~\ref{THM3} under the assumptions \eqref{LOWV2}--\eqref{ASSPHIPSI}.
\end{remark}
The remaining part of this paper is organized as follows: in Section~\ref{SEC2}, we deal with a local well-posedness result involving lower estimate for $v$; in Section~\ref{SEC3}, we prove the global well-posedness for large data(Theorem~\ref{THM1});  
 in Section~\ref{SEC4}, we prove the global well-posedness for small data(Theorem~\ref{THM2}   and Theorem~\ref{THM2.5}); in Section~\ref{SEC5}, we prove a long time asymptotics(Theorem~\ref{THM3}). Throughout this paper, $C$ will denote a generic constant that may change from line to line.

\section{Local well-posedness}\label{SEC2}

In this section,  the local well-posedness   for \eqref{MODEL0} is obtained. Note that
  \eqref{MODEL0}  has   singular structure $\frac{1}{v}$ in the drift term $\nabla \cdot \bke{ u  \frac{\nabla v}{v}  }$ but due to the positivity of $v_0$ in Assumption~\ref{MODEL012}, such a   singularity  does not occur in a finite time unless blow-up occurs.
\begin{lemma}\label{LEM_LE}
Let $\Omega$ be a smooth,  bounded and convex domain of $\R^{d}$, $d\ge2$.  Suppose that $(u_0,v_0,\varphi,\psi)$ satisfies Assumption~\ref{MODEL012}, and let $\sigma\ge0$, $0\le \lambda\le1$, and $q>d$. Then,
there exists the maximal time of existence, $T_{\rm max}\le \infty$, such that a unique non-negative   solution $(u,v)$ of  \eqref{MODEL0}  exists and satisfies
\[
u,v\in \bke{  \mathcal{C}^{2+\alpha, 1+\frac{\alpha}{2}}(\overline{\Omega}\times[0,T_{\rm max}))}^{2},
\]
\begin{equation}\label{LEM_LE1E}
 e^{-t}\min_{\overline{\Omega}} v_{0} \le  \min_{\overline{\Omega}} v(\cdot,t)  \qquad \mbox{ for }\, t<T_{\rm max}
\end{equation}
and     
\begin{equation}\label{LEM_LE2E}
\mbox{either }\, T_{\rm max}=\infty \,\,\mbox{ or }\,\, \lim_{t\nearrow T_{\rm max}}   ( \|u(\cdot,t)\|_{L^{\infty}( \Omega )} +\|v(\cdot,t)\|_{W^{1,q}( \Omega )} )=\infty.
\end{equation}
\end{lemma}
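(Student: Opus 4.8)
The plan is to construct the solution by a contraction-mapping (Banach fixed point) argument based on the variation-of-constants formulation associated with the Neumann heat semigroup $(e^{t\Delta})_{t\ge0}$, and then to bootstrap the resulting mild solution to the stated parabolic Hölder regularity by Schauder theory. Fix $q>d$, so that $W^{1,q}(\Omega)\hookrightarrow \mathcal{C}(\overline{\Omega})$, and set $m_0:=\min_{\overline{\Omega}}v_0>0$. Writing the system in mild form,
\[
v(t)=e^{t(\Delta-1)}v_0+\int_0^t e^{(t-s)(\Delta-1)}\bke{u(s)v(s)^\lambda+\psi(s)}\,ds,
\]
\[
u(t)=e^{t\Delta}u_0+\int_0^t e^{(t-s)\Delta}\bke{-\chi\nabla\cdot\bke{u(s)\nabla\log v(s)}-\sigma u(s)v(s)+\varphi(s)}\,ds,
\]
I would run the fixed point on a closed ball of $\mathcal{C}([0,T];\mathcal{C}(\overline{\Omega}))\times\mathcal{C}([0,T];W^{1,q}(\Omega))$ centered at the initial data, intersected with the constraint $v\ge\tfrac12 m_0$, for $T$ sufficiently small. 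The essential ingredients are the smoothing estimates $\norm{\nabla e^{t\Delta}f}_{L^q}\le C t^{-1/2}\norm{f}_{L^q}$ and the divergence estimate $\norm{e^{t\Delta}\nabla\cdot f}_{L^q}\le C(1+t^{-1/2})\norm{f}_{L^q}$, which let me absorb the singular drift $\nabla\cdot(u\nabla\log v)$; here $\nabla\log v=\nabla v/v$ is controlled because the lower bound $v\ge\tfrac12 m_0$ keeps $1/v$ bounded while $\nabla v$ is controlled by the $W^{1,q}$ norm. Since the time factors $t^{-1/2}$ are integrable, the nonlinear map is a contraction for small $T$, yielding a unique local mild solution.

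Next I would verify the two qualitative features before bootstrapping. For the lower bound \eqref{LEM_LE1E}, since $u,v,\psi\ge0$ one has $\partial_t v\ge\Delta v-v$, so comparison against the solution $w$ of $\partial_t w=\Delta w-w$, $w(0)=v_0$, gives $v\ge w=e^{-t}e^{t\Delta}v_0\ge e^{-t}\min_{\overline{\Omega}}v_0$; in particular $v$ stays strictly positive on any finite interval, which is precisely what removes the singularity of $\frac1v$. Non-negativity of $u$ follows from the maximum principle applied to the linear (in $u$) equation
\[
\partial_t u-\Delta u+\chi\nabla\log v\cdot\nabla u+(\chi\Delta\log v+\sigma v)u=\varphi\ge0,
\]
whose coefficients are now known functions, with $u_0\ge0$ and zero Neumann data, so that $u\equiv0$ serves as a subsolution.

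Once the solution is known to lie in the above class with $u\in L^{\infty}$ and $v$ bounded below, the coefficients appearing in both equations are Hölder continuous in $(x,t)$; applying parabolic Schauder estimates, together with the compatibility conditions $\partial_\nu u_0=\partial_\nu v_0=0$ from Assumption~\ref{MODEL012}, upgrades $(u,v)$ to $\bke{\mathcal{C}^{2+\alpha,1+\frac{\alpha}{2}}(\overline{\Omega}\times[0,T])}^2$. The extensibility alternative \eqref{LEM_LE2E} is then obtained in the standard way: if $T_{\rm max}<\infty$ while $\|u(\cdot,t)\|_{L^{\infty}(\Omega)}+\|v(\cdot,t)\|_{W^{1,q}(\Omega)}$ stays bounded as $t\nearrow T_{\rm max}$, then—because \eqref{LEM_LE1E} keeps $v$ bounded below on $[0,T_{\rm max}]$—the length of the existence interval furnished by the contraction depends only on these bounds, so the solution can be restarted near $T_{\rm max}$ and continued past it, contradicting maximality.

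I expect the main obstacle to be the simultaneous control of the singular drift and the lower bound on $v$: the map must be arranged so that the constraint $v\ge\tfrac12 m_0$ is \emph{preserved} (not merely assumed), and the divergence-form smoothing estimate must be balanced against the loss of one spatial derivative hidden in $\nabla\log v$. The comparison bound \eqref{LEM_LE1E} essentially decouples this difficulty, since it supplies positivity of $v$ independently of the fixed point, leaving only the routine—if delicate—Banach estimates for the drift.
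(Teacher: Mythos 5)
Your proposal is correct and follows essentially the same route the paper relies on: the paper omits the proof of this lemma and instead cites the contraction-mapping/semigroup construction of \cite[Lemmas 1--2]{F18} (see also \cite{RB10,W11}) together with a Schauder bootstrap to reach the $\mathcal{C}^{2+\alpha,1+\frac{\alpha}{2}}$ class, which is exactly your fixed-point-plus-Schauder argument. One minor reordering: prove the non-negativity of $u$ (your maximum-principle step, which only needs the boundedness of $\nabla\log v$, $\Delta\log v$, $\sigma v$ furnished by the fixed-point constraint $v\ge\tfrac12 m_0$ and parabolic smoothing) \emph{before} the comparison bound \eqref{LEM_LE1E}, since the inequality $\partial_t v\ge\Delta v-v$ uses $uv^{\lambda}\ge0$ and hence $u\ge0$.
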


Note that except for the regularity near $t=0$,
$u,v\in \bke{  \mathcal{C}^{2+\alpha, 1+\frac{\alpha}{2}}(\overline{\Omega}\times[0,T_{\rm max}))}^{2}$,
  Lemma~\ref{LEM_LE} with  $\sigma=\lambda=1$  was obtained in    \cite[Lemma~1--Lemma~2]{F18}. Since $u_0,v_0\in \bke{  \mathcal{C}^{2+\alpha}(\overline{\Omega})}^{2}$, we have $u,v\in \bke{  \mathcal{C}^{2+\alpha, 1+\frac{\alpha}{2}}(\overline{\Omega}\times[0,T_{\rm max}))}^{2}$   by Schauder's estimate(see e.g., \cite{LSU88}).   We  omit the proof of Lemma~\ref{LEM_LE} since its generalization to other cases  is rather straightforward(see also \cite{RB10,W11}).

 The temporal   bound \eqref{LEM_LE1E} can be replaced by uniform one
if we further assume that
\begin{equation}\label{LOWV1}
\mbox{ either }\quad  \displaystyle\inf_{t\ge0}   \int_{\Omega}\psi(\cdot,t) >0\quad \mbox{ or }\quad    \sigma=0\le\lambda<1,\,\, \|u_{0}\|_{L^{1}(\Omega)}>0.
 \end{equation}
\begin{lemma}\label{UNIFLOWV}
Let the same assumptions as in Lemma~\ref{LEM_LE} be satisfied.
  Let $(u,v)$ be the solution to \eqref{MODEL0}  given by Lemma~\ref{LEM_LE}. If \eqref{LOWV1} holds, then there exists a positive constant $\eta_{1}$ independent of $t$ such that
\begin{equation}\label{LEM2_LE}
\eta_{1}\le \min_{\overline{\Omega}} v(\cdot, t) \quad \mbox{ for all }\, t<T_{\rm max}.
 \end{equation}
In particular, if  $\displaystyle  \inf_{x\in\overline{\Omega},\\s\ge0} \psi(x,s)  >0$,  then there exists a positive constant 
\[
\eta_{2}=\min \{    \min_{\overline{\Omega}} v_0,\, \inf_{x\in\overline{\Omega},\\s\ge0} \psi(x,s)          \}
\] independent of $t$ satisfying
 \begin{equation}\label{LEM2_LE2}
\eta_{2}\le \min_{\overline{\Omega}} v(\cdot, t) \quad \mbox{ for all }\, t<T_{\rm max}.
 \end{equation}
 \end{lemma}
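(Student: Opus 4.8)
The plan is to read the second equation of \eqref{MODEL0} as the inhomogeneous heat equation
\[
\partial_{t}v-\Delta v+v=uv^{\lambda}+\psi
\]
with a nonnegative right-hand side, and to extract a uniform lower bound on $v$ either from the forcing $\psi$ or, when $\psi$ may vanish, from the coupling with $u$. The main tool is the Neumann heat semigroup $e^{t\Delta}$ on $\overline{\Omega}$ with kernel $p(t,x,y)$: since $p$ is continuous and strictly positive for $t>0$, compactness of $[1,2]\times\overline{\Omega}\times\overline{\Omega}$ yields a constant $c_{\ast}:=\min\{p(s,x,y):(s,x,y)\in[1,2]\times\overline{\Omega}\times\overline{\Omega}\}>0$. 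I will repeatedly restrict the Duhamel integral to the window $t-s\in[1,2]$, where $p(t-s,\cdot,y)\ge c_{\ast}$ and the exponential damping is bounded below by $e^{-2}$; this is precisely what turns information at times $s$ into a $t$-independent bound.

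Under the first alternative in \eqref{LOWV1}, only $\inf_{t}\int_{\Omega}\psi>0$ is available, so a pointwise argument is not possible and I would use the Duhamel representation of $v$. Since $v_{0}\ge0$, $uv^{\lambda}\ge0$ and $e^{t(\Delta-1)}$ is positivity preserving,
\[
v(\cdot,t)\ge\int_{0}^{t}e^{(t-s)(\Delta-1)}\psi(\cdot,s)\,ds\ge\int_{t-2}^{t-1}e^{-(t-s)}\Big(\int_{\Omega}p(t-s,\cdot,y)\psi(y,s)\,dy\Big)ds\qquad(t\ge2),
\]
and bounding the kernel and the damping factor on the window gives $v(\cdot,t)\ge e^{-2}c_{\ast}\inf_{\tau}\int_{\Omega}\psi(\cdot,\tau)>0$; for $t\in[0,2]$ one uses \eqref{LEM_LE1E}, and taking the smaller of the two constants yields $\eta_{1}$. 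For the sharper pointwise conclusion \eqref{LEM2_LE2}, when $\eta=\inf_{\overline{\Omega}\times[0,\infty)}\psi>0$, I would instead compare with the constant $\eta_{2}=\min\{\min_{\overline{\Omega}}v_{0},\eta\}$: since $u\ge0$ and $\eta_{2}\le\eta\le\psi$ one has $0=\partial_{t}\eta_{2}-\Delta\eta_{2}+\eta_{2}-\eta_{2}\le -\eta_{2}+u\eta_{2}^{\lambda}+\psi$, so $\eta_{2}$ is a subsolution satisfying the Neumann condition with $\eta_{2}\le v_{0}$, and the comparison principle (the boundary case being handled by Hopf's lemma) gives $v\ge\eta_{2}$.

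The main obstacle is the second alternative $\sigma=0\le\lambda<1$ with $\|u_{0}\|_{L^{1}(\Omega)}>0$, where $\psi$ may vanish identically and positivity must come entirely from $u$. First I would integrate the $u$-equation: with $\sigma=0$ the diffusion and drift terms disappear under the Neumann conditions, so $\frac{d}{dt}\int_{\Omega}u=\int_{\Omega}\varphi\ge0$ and hence $\int_{\Omega}u(\cdot,t)\ge\|u_{0}\|_{L^{1}(\Omega)}=:M>0$ for all $t<T_{\rm max}$. The difficulty is that the source in the $v$-equation is the degenerate term $uv^{\lambda}$, so a bare mass bound on $u$ does not feed into $v$ where $v$ is small. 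To linearize it I would set $h:=v^{1-\lambda}$, which is smooth and positive on $[0,T_{\rm max})$ by \eqref{LEM_LE1E}; a direct computation gives
\[
\partial_{t}h=\Delta h+\lambda(1-\lambda)v^{-\lambda-1}|\nabla v|^{2}-(1-\lambda)h+(1-\lambda)u+(1-\lambda)v^{-\lambda}\psi\ge\Delta h-(1-\lambda)h+(1-\lambda)u,
\]
where the gradient and $\psi$ terms are nonnegative. Applying Duhamel and the same heat-kernel lower bound to the genuinely linear source $(1-\lambda)u$, together with $\int_{\Omega}u\ge M$, yields $h(\cdot,t)\ge(1-\lambda)e^{-2}c_{\ast}M=:c_{0}>0$ for $t\ge2$, hence $v\ge c_{0}^{1/(1-\lambda)}$, and \eqref{LEM_LE1E} covers $[0,2]$. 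I expect this substitution, which converts the degenerate forcing $uv^{\lambda}$ into the linear forcing $(1-\lambda)u$ so that the $L^{1}$-mass of $u$ can be propagated to a pointwise lower bound on $v$ through the positivity of the Neumann heat kernel, to be the key technical point.
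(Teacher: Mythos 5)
Your proposal is correct, and for the hardest case it takes a genuinely different route from the paper. For the first alternative in \eqref{LOWV1} and for the pointwise bound \eqref{LEM2_LE2} you do essentially what the paper does: Duhamel plus a positive lower bound on the Neumann heat kernel in the first case (the paper bounds the kernel below by the Gaussian kernel via convexity, citing Fujie's lemma, and exploits monotonicity of the resulting time integral, whereas you use compactness of $[1,2]\times\overline{\Omega}\times\overline{\Omega}$ and positivity/continuity of the kernel together with a sliding time window; both are fine, and your window trick cleanly replaces the paper's comparison of a decreasing bound with an increasing one), and the maximum principle applied to $v-\eta_{2}$ in the second case. The real divergence is the case $\sigma=0\le\lambda<1$, $\|u_{0}\|_{L^{1}(\Omega)}>0$: the paper keeps the degenerate source $uv^{\lambda}$ and arrives at the nonlinear feedback inequality $\min_{\overline{\Omega}}v(\cdot,t)\ge f(t)\,(\inf_{s\le t}\min_{\overline{\Omega}}v(\cdot,s))^{\lambda}$, which it closes by an implicit first-crossing/contradiction argument (choosing the time $\tau$ where $e^{-\tau}\min_{\overline{\Omega}}v_{0}$ meets $f(\tau)$); you instead substitute $h=v^{1-\lambda}$, verify $\partial_{t}h\ge\Delta h-(1-\lambda)h+(1-\lambda)u$ with Neumann data, and feed the conserved-or-growing mass $\int_{\Omega}u\ge\|u_{0}\|_{L^{1}(\Omega)}$ through the kernel bound to get an explicit lower bound for $h$, hence for $v$. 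Your linearization eliminates the bootstrapping entirely and produces an explicit constant, at the cost of computing the equation for $v^{1-\lambda}$ and invoking a parabolic comparison step to pass from the differential inequality for $h$ to the Duhamel lower bound; the paper's argument avoids that substitution but is more delicate logically. Two points you should make explicit in a final write-up: the strict positivity and continuity of the Neumann heat kernel on $[1,2]\times\overline{\Omega}\times\overline{\Omega}$ (a standard fact, but it deserves a citation, playing the role of the paper's Gaussian bound), and the comparison-principle justification that a supersolution of the linear inhomogeneous equation dominates its Duhamel representation.
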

\begin{proof} 
Let  $ t<T_{\rm max}$ and denote ${\rm diam }\,\Omega:=\displaystyle\max_{x,y\in\overline{\Omega}}|x-y|$. As in the proof of \cite[Lemma 2.2]{F15}, we  use the representation formula of $v$ and $e^{t(\Delta-1)}v_0\ge0$  to obtain that
\[
v(t)=e^{t(\Delta-1)}v_0+\int_{0}^{t}e^{(t-s)(\Delta-1)}(uv^{\lambda} +\psi)(s) ds
\ge \int_{0}^{t}e^{(t-s)(\Delta-1)}(uv^{\lambda} +\psi)(s) ds 
\]
\begin{equation}\label{LEM4}
\ge \int_{0}^{t} \frac{1}{(4\pi(t-s))^{\frac{d}{2}}}e^{-(  \frac{({\rm diam }\,\Omega)^{2}}{4(t-s)} +      (t-s))}\bke{\int_{\Omega}uv^{\lambda}(x,s) +\psi(x,s)dx}ds.
\end{equation}
Now, using \eqref{LEM4}, we show \eqref{LEM2_LE}. We treat two cases in \eqref{LOWV1} separately. 

$\bullet$   ${\displaystyle\inf_{t\ge0}}  \int_{\Omega}\psi(\cdot,t)>0$ case.\\ 
 The right-hand side  of \eqref{LEM4} has a lower bound 
\begin{align*}
 \int_{0}^{t} &\frac{1}{(4\pi(t-s))^{\frac{d}{2}}}e^{-(  \frac{({\rm diam }\,\Omega)^{2}}{4(t-s)} +      (t-s))}\bke{\int_{\Omega}uv^{\lambda}(x,s) +\psi(x,s)dx}ds
\\
  &\ge\int_{0}^{t} \frac{1}{(4\pi(t-s))^{\frac{d}{2}}}e^{-(  \frac{({\rm diam }\,\Omega)^{2}}{4(t-s)} +      (t-s))}\bke{\int_{\Omega} \psi(x,s)dx}ds
\\
 &\ge \bke{{\displaystyle\inf_{s\ge0}}  \int_{\Omega}\psi(\cdot,s)}\int_{0}^{t} \frac{1}{(4\pi r)^{\frac{d}{2}}}e^{-(  \frac{({\rm diam }\,\Omega)^{2}}{4r} +      r)} dr,
\end{align*}
which is a monotone increasing function of $t$  with initial value zero. Combining it with  the decreasing lower bound in \eqref{LEM_LE1E}, we have a desired bound \eqref{LEM2_LE}.

$\bullet$  $\sigma=0\le\lambda<1$, $\|u_{0}\|_{L^{1}(\Omega)}>0$ case.
\\
 Integrating  $u$-equation   over $\Omega$,  we first note that  
$
\int_{\Omega}u(\cdot,t)\ge\int_{\Omega}u_0
$ for   $ t<T_{\rm max}$. 
We compute a  lower bound of the  right-hand side of  \eqref{LEM4} as 
\begin{align*}
 \int_{0}^{t} &\frac{1}{(4\pi(t-s))^{\frac{d}{2}}}e^{-(  \frac{({\rm diam }\,\Omega)^{2}}{4(t-s)} +      (t-s))}\bke{\int_{\Omega}uv^{\lambda}(x,s) +\psi(x,s)dx}ds
\\
 &\ge\int_{0}^{t} \frac{1}{(4\pi(t-s))^{\frac{d}{2}}}e^{-(  \frac{({\rm diam }\,\Omega)^{2}}{4(t-s)} +      (t-s))}\bke{\int_{\Omega} uv^{\lambda}(x,s) dx}ds
\\
&\ge
  \int_{0}^{t} \frac{1}{(4\pi(t-s))^{\frac{d}{2}}}e^{-(  \frac{({\rm diam }\,\Omega)^{2}}{4(t-s)} +      (t-s))}\bke{\int_{\Omega} u(x,s)dx  }(\min_{\overline{\Omega}} v(\cdot,s) )^{\lambda}ds
\\
 &\ge 
  \int_{0}^{t} \frac{1}{(4\pi r)^{\frac{d}{2}}}e^{-(  \frac{({\rm diam }\,\Omega)^{2}}{4r } +      r)}dr\bke{\int_{\Omega} u_0   }(\inf_{  s\le t} \min_{\overline{\Omega}} v(\cdot,s)  )^{\lambda}.
\end{align*}
By \eqref{LEM4}, it follows that 
   \begin{equation}\label{LEM4_1}
  \min_{\overline{\Omega}} v(\cdot,t) \ge   \int_{0}^{t} \frac{1}{(4\pi r)^{\frac{d}{2}}}e^{-(  \frac{({\rm diam }\,\Omega)^{2}}{4r } +      r)}dr\bke{\int_{\Omega} u_0   }(\inf_{  s\le t} \min_{\overline{\Omega}} v(\cdot,s)  )^{\lambda}.
   \end{equation}
Note that there exists a unique $\tau>0$ satisfying
$
\displaystyle e^{-\tau}\min_{\overline{\Omega}} v_{0} =f(\tau)$, where
 \[
 f(t):=\bkt{
  \int_{0}^{t} \frac{1}{(4\pi r)^{\frac{d}{2}}}e^{-(  \frac{({\rm diam }\,\Omega)^{2}}{4r } +      r)}dr\bke{\int_{\Omega} u_0   } }^{\frac{1}{1-\lambda}},\qquad t>0.
 \]  
 Indeed,  $\displaystyle e^{-t}\min_{\overline{\Omega}} v_{0}  $ is the monotone decreasing function of $t$ approaching zero for large $t$, and $f(t)$
 is the monotone increasing function of $t$  with initial value zero.   Note also from  \eqref{LEM_LE1E} that
 $\displaystyle \min_{\overline{\Omega}} v(\cdot, t) \ge e^{-\tau}\min_{\overline{\Omega}} v_{0} $
 for all $ t\le\tau$. Now, let $\tilde{\eta} \in(0,e^{-\tau}\displaystyle\min_{\overline{\Omega}} v_{0} )$ and suppose that 
\[
 \displaystyle \min_{\overline{\Omega}} v(\cdot, t_{1}) =\tilde{\eta}  
\]
  for the first time $t_{1}>\tau$.  By \eqref{LEM4_1}, we have
\begin{align*}
  \min_{\overline{\Omega}} v(\cdot,t_{1}) &\ge   \int_{0}^{t_{1}} \frac{1}{(4\pi r)^{\frac{d}{2}}}e^{-(  \frac{({\rm diam }\,\Omega)^{2}}{4r } +      r)}dr\bke{\int_{\Omega} u_0   }(\inf_{  s\le t_{1}} \min_{\overline{\Omega}} v(\cdot,s)  )^{\lambda}
\\
&= \int_{0}^{t_{1}} \frac{1}{(4\pi r)^{\frac{d}{2}}}e^{-(  \frac{({\rm diam }\,\Omega)^{2}}{4r } +      r)}dr\bke{\int_{\Omega} u_0   }( \min_{\overline{\Omega}} v(\cdot,t_{1}) )^{\lambda}
\\
&\ge \int_{0}^{\tau} \frac{1}{(4\pi r)^{\frac{d}{2}}}e^{-(  \frac{({\rm diam }\,\Omega)^{2}}{4r } +      r)}dr\bke{\int_{\Omega} u_0   }( \min_{\overline{\Omega}} v(\cdot,t_{1}) )^{\lambda},
\end{align*}
namely,
\[
\tilde{\eta}\ge f(\tau)= e^{-\tau}\min_{\overline{\Omega}} v_{0} ,\]
which leads to contradiction to  $\tilde{\eta} \in(0,e^{-\tau}\displaystyle\min_{\overline{\Omega}} v_{0} )$. Therefore,  
 $\displaystyle \min_{\overline{\Omega}} v(\cdot, t) \ge e^{-\tau}\min_{\overline{\Omega}} v_{0} $
for all $t>0$ and thus, \eqref{LEM2_LE} is obtained. 

Next,   \eqref{LEM2_LE2} is a direct consequence of the maximum principle applied to $v$-equation with   $\displaystyle  \inf_{x\in\overline{\Omega},\\s\ge0} \psi(x,s)  >0$. This completes the proof.
\end{proof}

\section{Global well-posedness for large data}\label{SEC3}
 In this section, we prove Theorem~\ref{THM1}.   Using the   maximum principle argument, we
 obtain an estimate for  $\frac{u}{v^{1-\lambda}}$ and $\nabla \log v$. 
 \begin{proposition}\label{PROP1}
 Let $\Omega$ be a smooth, bounded and convex domain of $\R^{d}$, $d\ge2$, and let $\sigma\ge0$, $0\le\lambda<1$. Suppose that $(u_0,v_0,\varphi,\psi)$ satisfies Assumption~\ref{MODEL012},  and    $(u,v)$ is a unique solution to \eqref{MODEL0}  given by Lemma~\ref{LEM_LE}. Then, there exists a      constants $C>0$  independent of   $t$  such that we have the following:
  \begin{itemize}
 \item[(i)] If $\chi\le \chi_{d,\lambda}$, then
 \begin{equation}\label{PR1_1}
  \norm{\frac{u}{v^{1-\lambda}}(\cdot,t)}_{L^{\infty}(\Omega)}+  \norm{  \nabla \log v (\cdot,t)  }_{L^{\infty}(\Omega)}^2  \le C e^{2 t} \qquad \mbox{ for }\, t<T_{\rm max},
  \end{equation}
 \item[(ii)] If $\chi< \chi_{d,\lambda}$, $\sigma>0$, and  ${\displaystyle\inf_{s\ge0}}  \int_{\Omega}\psi(\cdot,s) >0$,  then
 \begin{equation}\label{PR1_3}
  \norm{\frac{u}{v^{1-\lambda}}(\cdot,t)}_{  L^{\infty}(\Omega) }+ \norm{ \nabla \log v(\cdot,t)}_{ L^{\infty}(\Omega) }+ \norm{v(\cdot,t)}_{ L^{\infty}(\Omega) }\le C,
\end{equation}
 \item[(iii)] If $\chi< \chi_{d,\lambda}$ and $(\varphi(x,s),\psi(x,s))=(0,b(s))$ for all $x\in \Omega$, $s>0$, then \begin{equation}\label{PR1_2}
  \norm{\frac{u}{v^{1-\lambda}}(\cdot,t)}_{ L^{\infty}(\Omega) }+ \norm{\nabla \log v(\cdot,t)}_{  L^{\infty}(\Omega) }\le C.
\end{equation}
 \end{itemize}
 \end{proposition}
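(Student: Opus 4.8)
The plan is to prove all three estimates through a single maximum-principle argument applied to the auxiliary quantity
\[
z=\frac{u}{v^{1-\lambda}}+\bke{\lambda+\frac{\chi}{2}}|\nabla\log v|^{2},
\]
which, as recorded in the introduction, satisfies $\partial_{t}z-\Delta z-2\nabla z\cdot\nabla\log v+F(u,v,\psi)=G(u,v,\varphi,\psi)$ with the explicit $F$ and $G$ stated there. First I would verify this identity by direct computation: writing $w=u/v^{1-\lambda}$ and $q=|\nabla\log v|^{2}$, one differentiates the two equations of \eqref{MODEL0}, the Bochner-type term $(2\lambda+\chi)|D^{2}\log v|^{2}$ in $F$ arising when the $v$-equation is differentiated twice and the first-order terms are recombined. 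Since $z\ge w$ and $z\ge(\lambda+\tfrac{\chi}{2})q$, a pointwise upper bound for $z$ immediately yields the claimed bounds on $u/v^{1-\lambda}$ and $\nabla\log v$, so it suffices to control $M(t):=\max_{\overline{\Omega}}z(\cdot,t)$.

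Next I would run the parabolic maximum principle. Because $\partial u/\partial\nu=\partial v/\partial\nu=0$ on $\partial\Omega$ we have $\partial_{\nu}w=0$, while the convexity of $\Omega$ together with $\partial_{\nu}(\log v)=0$ gives the standard geometric inequality $\partial_{\nu}|\nabla\log v|^{2}\le0$ on $\partial\Omega$; hence $\partial_{\nu}z\le0$ and a spatial maximum of $z$ may be treated as an interior one. At such a point $\nabla z=0$ and $\Delta z\le0$, so the drift and diffusion terms drop and $\tfrac{d}{dt}M(t)\le(G-F)\big|_{\mathrm{max}}$. This reduces everything to the pointwise inequality $G-F\le(\text{admissible right-hand side})$, and it is here that the convexity hypothesis is essential.

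The crux is the challenging term $-\chi w\,\Delta\log v$ in $G$. Using $|\Delta\log v|\le\sqrt{d}\,|D^{2}\log v|$ and Young's inequality, for any $a>0$,
\[
\chi w\,|\Delta\log v|\le\chi\sqrt{d}\,w\,|D^{2}\log v|\le a\,w^{2}+\frac{d\chi^{2}}{4a}\,|D^{2}\log v|^{2}.
\]
I want to absorb these two pieces into the good terms $(1-\lambda)w^{2}$ and $(2\lambda+\chi)|D^{2}\log v|^{2}$ of $F$, i.e.\ to pick $a$ with $\tfrac{d\chi^{2}}{4(2\lambda+\chi)}\le a\le 1-\lambda$. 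Such an $a$ exists \emph{precisely} when $d\chi^{2}\le4(1-\lambda)(2\lambda+\chi)$, and solving this quadratic in $\chi$ returns exactly $\chi\le\chi_{d,\lambda}$ with $\chi_{d,\lambda}$ as in \eqref{CHIDLAM}; when $\chi<\chi_{d,\lambda}$ the associated quadratic form is positive definite and leaves a margin $\theta(w^{2}+|D^{2}\log v|^{2})$ for some $\theta>0$. This single algebraic balancing is the main obstacle, as it both pins down the threshold and explains the appearance of the dimension $d$.

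It remains to dispose of the lower-order part of $G$ and to split into the three cases. The term $(1-\lambda)w$ is absorbed into $w^{2}$; writing $|\nabla\psi|=2\sqrt{\psi}\,|\nabla\sqrt{\psi}|$, the term $\tfrac{(2\lambda+\chi)}{2v}\nabla\log v\cdot\nabla\psi$ is split by Young into $\varepsilon q+\tfrac{C}{v}|\nabla\sqrt{\psi}|^{2}$, with $\varepsilon q\le\tfrac{\varepsilon}{\lambda+\chi/2}z$; and $\tfrac{\varphi}{v^{1-\lambda}}$ is retained. One then obtains a differential inequality for $M(t)$ whose character is dictated by the available lower bound on $v$. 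For (i) only the temporal bound $v\ge e^{-t}\min_{\overline{\Omega}}v_{0}$ of \eqref{LEM_LE1E} is at hand, so the $\tfrac{1}{v}$ and $\tfrac{1}{v^{2}}$ factors grow at most like $e^{2t}$ and $M'(t)\le\varepsilon'M(t)+Ce^{2t}$ with $\varepsilon'$ small, which integrates to $M\le Ce^{2t}$ and yields \eqref{PR1_1}. For (ii) the hypothesis $\inf_{s\ge0}\int_{\Omega}\psi>0$ gives, via Lemma~\ref{UNIFLOWV}, a uniform lower bound $v\ge\eta_{1}$, so those factors are bounded and the damping $-\sigma vw\le-\sigma\eta_{1}w$ combines with $-\theta w^{2}$ to give $M'(t)\le-cM(t)+C$ and hence uniform control of $w$ and $q$; the $L^{\infty}$-bound for $v$ then follows from the coupled structure, since the damping $-\sigma vw$ forces $w$ below $1$ wherever $v$ is large, which through $\partial_{t}v=\Delta v+(w-1)v+\psi$ prevents $v$ from growing, establishing \eqref{PR1_3}. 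For (iii) the choice $\varphi=0$ and $\psi=b(t)$ spatially constant makes $\nabla\psi\equiv0$ and $\varphi/v^{1-\lambda}\equiv0$, so the troublesome lower-order terms vanish outright and the dissipative structure alone delivers the uniform bound \eqref{PR1_2}.
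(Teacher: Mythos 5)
Your reduction is the paper's own: the same $z=\frac{u}{v^{1-\lambda}}+\theta|\nabla\log v|^{2}$ with $\theta=\lambda+\frac{\chi}{2}$, the same boundary inequality $\frac{\partial z}{\partial\nu}\le0$ from convexity, and the same Young-type balancing $\chi w|\Delta\log v|\le a w^{2}+\frac{d\chi^{2}}{4a}|D^{2}\log v|^{2}$ (writing $w:=\frac{u}{v^{1-\lambda}}$), whose solvability in $a$ is exactly the condition $\chi\le\chi_{d,\lambda}$; this is \eqref{UVEQ3}--\eqref{UVEQ4} in the paper. Your case (i) is correct and agrees with the paper's argument: with only the temporal lower bound \eqref{LEM_LE1E} at hand, the inhomogeneous terms grow at most like $e^{2t}$, and a comparison/Gronwall argument yields \eqref{PR1_1}.

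Cases (ii) and (iii), however, contain a genuine gap, and it is the same one in both: the comparison function $z$ carries \emph{no damping in the $q:=|\nabla\log v|^{2}$ component}, so the differential inequalities you assert for $M(t)=\max_{\overline{\Omega}}z(\cdot,t)$ do not follow. In (ii), the damping you invoke, $\sigma vw\ge\sigma\eta_{1}w$ together with the margin $\theta'w^{2}$, acts only on the $w$-component of $z=w+\theta q$, while your Young splitting of the $\nabla\psi$ term puts $+\varepsilon q$ on the right-hand side. The only terms of $F$ containing $q$ are $(1-\lambda)(\lambda+\chi)wq$ (useless where $w$ is small) and $(2\lambda+\chi)\frac{\psi}{v}q$ (useless here, since (ii) assumes only $\inf_{s}\int_{\Omega}\psi>0$, and $\psi$ may vanish pointwise). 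Hence at a maximum point with $w$ small and $q$ large your inequality reads $M'\le\varepsilon q+C$, which permits growth; $M'\le-cM+C$ is unjustified. The subsequent claim that the damping ``forces $w$ below $1$ wherever $v$ is large'' is also not a proof: the maximum principle only gives $w\le C$ with $C$ possibly much larger than $1$, and nothing ties the spatial maximum of $v$ to points where $w<1$. The paper repairs both defects with a single device you are missing: it applies the maximum principle to $\mathcal{Z}=z+\frac{\sigma}{2}v$ rather than to $z$. Since $\nabla v=v\nabla\log v$, rewriting the drift gives $-2\nabla z\cdot\nabla\log v=-2\nabla\mathcal{Z}\cdot\nabla\log v+\sigma v|\nabla\log v|^{2}$, so the missing $q$-damping $\sigma vq\ge\sigma\eta_{1}q$ appears automatically (half of it absorbs the $\nabla\psi$ term); the contribution $-\frac{\sigma}{2}v+\frac{\sigma}{2}wv+\frac{\sigma}{2}\psi$ of the $v$-equation is dominated using half of the damping $\sigma vw$; and the resulting uniform bound on $\mathcal{Z}$ \emph{contains} the $L^{\infty}$ bound on $v$, which is precisely how \eqref{PR1_3} acquires its $v$-part. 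Finally, in (iii) you should not absorb $(1-\lambda)w$ into $w^{2}$ plus a constant: that leaves only $M'\le C$, i.e.\ linear growth, again for lack of $q$-damping. Keep the factor of $w$ instead: every bad term in (iii) is proportional to $w$, so choosing $\varepsilon>0$ with $\varepsilon z\le(1-\lambda)(\lambda+\chi)q+\bigl[(1-\lambda)-\frac{d\chi^{2}}{8\lambda+4\chi}\bigr]w$, the inequality becomes $\partial_{t}z-\Delta z-2\nabla z\cdot\nabla\log v+[\varepsilon z-(1-\lambda)]w\le0$, whose zeroth-order coefficient is nonnegative once $z\ge\frac{1-\lambda}{\varepsilon}$; the maximum principle then gives $z\le\max\bigl\{\|z_{0}\|_{L^{\infty}(\Omega)},\frac{1-\lambda}{\varepsilon}\bigr\}$, which is \eqref{PR1_2}.
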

\begin{proof}
From \eqref{MODEL0}, we  obtain
\begin{align}
\begin{aligned}\label{UEQ2}
\partial_{t}&\bke{ \frac{u}{v^{1-\lambda}} }-\Delta \bke{ \frac{u}{v^{1-\lambda}} }+(1-\lambda)(\lambda+\chi)\bke{ \frac{u}{v^{1-\lambda}} }|\nabla \log v|^{2}
\\
&+(1-\lambda) \bke{\frac{u}{v^{1-\lambda}}}^{2}+\sigma  v\bke{ \frac{u}{v^{1-\lambda}} }+(1-\lambda)  \bke{ \frac{u}{v^{1-\lambda}} } \frac{\psi  }{v}
\\
&=(2-2\lambda-\chi)\nabla \bke{ \frac{u}{v^{1-\lambda}} }\cdot \nabla \log v -\chi\bke{\frac{u}{v^{1-\lambda}}}\Delta \log v+(1-\lambda) \bke{ \frac{u}{v^{1-\lambda}} }+\frac{\varphi }{v^{1-\lambda}}.
\end{aligned}
\end{align}
From $\eqref{MODEL0}_{2}$, we  derive   
\begin{align}
\begin{aligned}\label{VEQ2}
\partial_{t}&|\nabla \log v|^{2}-\Delta |\nabla \log v|^{2}+2|D^{2}\log v|^{2}+2\frac{\psi }{v}|\nabla \log v|^{2}
\\
& =2\nabla |\nabla \log v|^{2}\cdot\nabla \log v +2\nabla\bke{ \frac{u}{v^{1-\lambda}}}\cdot\nabla \log v +\frac{2}{v}\nabla \log v\cdot \nabla \psi. 
\end{aligned}
\end{align}
Denote $z:=\frac{u}{v^{1-\lambda}}+\theta|\nabla \log v|^{2}$, where $\theta=\lambda+\frac{\chi}{2}$. Note that 
\begin{equation}\label{ZBCCON}
\frac{\partial z}{\partial\nu}\le0 \qquad\mbox{ on } \partial\Omega,
\end{equation}
which is due to $ \frac{\partial u}{\partial \nu} =  \frac{\partial v}{\partial \nu}=0$ and the convexity of  $\Omega$.
Adding $\eqref{UEQ2}$ and $\eqref{VEQ2}\times\theta $, we have
\[
\partial_{t}z-\Delta z-2\nabla z\cdot\nabla \log v 
+(1-\lambda)(\lambda+\chi)\bke{ \frac{u}{v^{1-\lambda}} }|\nabla \log v|^{2}+(1-\lambda) \bke{\frac{u}{v^{1-\lambda}}}^{2}
\]
 \[
 +\sigma  v\bke{ \frac{u}{v^{1-\lambda}} }+(1-\lambda)  \bke{ \frac{u}{v^{1-\lambda}} } \frac{\psi }{v}+2\theta|D^{2}\log v|^{2}+2\theta\frac{\psi }{v}|\nabla \log v|^{2}
 \]
\begin{equation}\label{UVEQ2}
=
-\chi\bke{\frac{u}{v^{1-\lambda}}}\Delta \log v+(1-\lambda) \bke{ \frac{u}{v^{1-\lambda}} }+\frac{\varphi }{v^{1-\lambda}} +\frac{2\theta}{v}\nabla \log v\cdot \nabla \psi. 
\end{equation}
Using Young's inequality and
\[
\nabla f\cdot \nabla \Delta f=\frac{1}{2}\Delta\abs{  \nabla f  }^{2}-\abs{ D^{2} f }^{2}  \,\mbox{ and }\,  \abs{\Delta f}\le \sqrt{d}\abs{D^{2} f} \,\mbox{ for }\, f\in\mathcal{C}^{2}(\overline{\Omega}),
\]
we compute the first  term on the right hand side of \eqref{UVEQ2}  as 
\begin{equation}\label{UVEQ3}
-\chi\bke{\frac{u}{v^{1-\lambda}}}\Delta \log v
\le 2\theta|D^{2}\log v|^{2}+(1-\lambda) \bke{\frac{u}{v^{1-\lambda}}}^{2}-\bigr{[}(1-\lambda)-\frac{d\chi^{2}}{8\lambda+4\chi} \bigr{]}\bke{\frac{u}{v^{1-\lambda}}}^{2}.
\end{equation}
Now, we consider three cases, \eqref{PR1_1}--\eqref{PR1_2}, separately. The definition of $\chi_{d,\lambda}$ gives
\begin{equation}\label{UVEQ4}
\begin{array}{ll}
(1-\lambda)-\displaystyle\frac{d\chi^{2}}{8\lambda+4\chi}=0\qquad \mbox{ if }\,  \chi= \chi_{d,\lambda},
\\
(1-\lambda)-\displaystyle\frac{d\chi^{2}}{8\lambda+4\chi}>0\qquad \mbox{ if }\,   \chi< \chi_{d,\lambda}.
\end{array}
\end{equation}

$\bullet$    $  (i) \, \, \chi\le \chi_{d,\lambda}$. \\
Using Young's inequality, we compute
\[
\frac{2\theta}{v}\nabla \log v\cdot \nabla \psi  \le (1-\lambda)\theta |\nabla \log v|^{2}+\frac{\theta}{  1-\lambda }|\nabla \psi |^{2}\frac{1}{v^2}.
\]
By \eqref{UVEQ2}--\eqref{UVEQ4}, Assumption~\ref{MODEL012}, and \eqref{LEM_LE1E}, it follows that
\begin{align*}
\partial_{t}&z-\Delta z-2\nabla z\cdot\nabla \log v 
\\
&\le (1-\lambda) \bke{ \frac{u}{v^{1-\lambda}} }+ (1-\lambda)\theta |\nabla \log v|^{2}+\frac{\varphi }{v^{1-\lambda}}+\frac{\theta}{ 1-\lambda }|\nabla \psi |^{2}\frac{1}{v^2}
\\
&
\le(1-\lambda) z+\frac{\|\varphi\|_{L^{\infty}(\Omega\times(0,\infty))}}{v^{1-\lambda}}+\frac{\theta}{ 1-\lambda }\|\nabla \psi\|_{L^{\infty}(\Omega\times(0,\infty))}^{2}\frac{1}{v^2}
\\
&\le(1-\lambda) z+\frac{\|\varphi\|_{L^{\infty}(\Omega\times(0,\infty))} e^{(1-\lambda)t}}{(\displaystyle\min_{\overline{\Omega}} v_{0} )^{1-\lambda}}+\frac{\theta}{ 1-\lambda }\frac{\|\nabla \psi\|_{L^{\infty}(\Omega\times(0,\infty))}^{2}e^{2t}}{(\displaystyle\min_{\overline{\Omega}} v_{0} )^{2}}
\\
&\le(1-\lambda) z+C_{1}e^{2t},
\end{align*}
where    $C_{1}$ is a positive constant independent of $t$.
Since
\[
\partial_{t}[ze^{-(1-\lambda) t}]-\Delta [ze^{-(1-\lambda) t}]-2\nabla [ze^{-(1-\lambda) t}]\cdot\nabla \log v
 \le
C_{1} e^{(1+\lambda)t},
\]
we have for $Z= ze^{-(1-\lambda) t}-\frac{C_{1}}{1+\lambda} e^{(1+\lambda)t}$ that
\[
\partial_{t}Z-\Delta Z-2\nabla Z\cdot\nabla \log v
 \le
0.
\]
As $\frac{\partial Z}{\partial\nu}\le0$  on $\partial\Omega$,  applying the maximum principle to $Z$-equation, we have  
\[
\| Z(\cdot,t)\|_{L^{\infty}(\Omega)}\le \|Z(\cdot,0)\|_{L^{\infty}(\Omega)}\qquad \mbox{ for } t<T_{\rm max}.
\]
Thus, \eqref{PR1_1} can be deduced.

$\bullet$    $ (ii) \, \, \chi< \chi_{d,\lambda}$, $\sigma>0$, and ${\displaystyle\inf_{s\ge0}}  \int_{\Omega}\psi(\cdot,s)>0$.   \\
From  \eqref{UVEQ2}--\eqref{UVEQ3}, we observe that
\begin{align}
\begin{aligned}\label{Prop1ii}
\partial_{t}&z-\Delta z-2\nabla z\cdot\nabla \log v 
\\
&+(1-\lambda)(\lambda+\chi)\bke{ \frac{u}{v^{1-\lambda}} }|\nabla \log v|^{2}+\bigr{[}(1-\lambda)-\frac{d\chi^{2}}{8\lambda+4\chi} \bigr{]}\bke{\frac{u}{v^{1-\lambda}}}^{2} 
\\
&+\sigma  v\bke{ \frac{u}{v^{1-\lambda}} }+(1-\lambda)  \bke{ \frac{u}{v^{1-\lambda}} } \frac{\psi }{v}+2\theta\frac{\psi }{v}|\nabla \log v|^{2}
\\
&\le 
 (1-\lambda) \bke{ \frac{u}{v^{1-\lambda}} }+\frac{\varphi }{v^{1-\lambda}}+\frac{2\theta}{v}\nabla \log v\cdot \nabla \psi,
 \end{aligned}
\end{align}
which yields
\begin{align*}
\partial_{t}&z-\Delta z-2\nabla z\cdot\nabla \log v 
\\
&+\bkt{    (1-\lambda)(\lambda+\chi)|\nabla \log v|^{2}    +\bigr{[}(1-\lambda)-\frac{d\chi^{2}}{8\lambda+4\chi} \bigr{]}\bke{\frac{u}{v^{1-\lambda}}}   +\sigma v - (1-\lambda)} \bke{ \frac{u}{v^{1-\lambda}} }
\\
&\le 
\frac{\varphi }{v^{1-\lambda}}+\frac{2\theta}{v}\nabla \log v\cdot \nabla \psi.
\end{align*}
Adding $\eqref{MODEL0}_{2}\times \frac{\sigma}{2}$ to the above inequality    and introducing $\mathcal{Z}:=z+\frac{\sigma}{2}v $, we have  
\begin{align*}
\partial_{t}&\mathcal{Z}-\Delta \mathcal{Z}-2\nabla \mathcal{Z}\cdot\nabla \log v 
\\
&+\bkt{    (1-\lambda)(\lambda+\chi)|\nabla \log v|^{2}    +\bigr{[}(1-\lambda)-\frac{d\chi^{2}}{8\lambda+4\chi} \bigr{]}\bke{\frac{u}{v^{1-\lambda}}}   +\frac{\sigma}{2} v - (1-\lambda)} \bke{ \frac{u}{v^{1-\lambda}} }
\\
&\le
 \frac{\varphi}{v^{1-\lambda}}+\frac{2\theta}{v}\nabla \log v\cdot \nabla \psi-\sigma v|\nabla \log v|^{2}-\frac{\sigma}{2}v+\frac{\sigma}{2}\psi.
\end{align*}
Using Young's inequality, we compute
\[
\frac{2\theta}{v}\nabla \log v\cdot \nabla \psi\le  \frac{\sigma}{2} v|\nabla \log v|^{2}+\frac{4\theta^{2}}{2\sigma v^3}|\nabla \psi|^{2}
\]
and using \eqref{UVEQ4}, we can find  a positive number $\varepsilon$ depending only on $\lambda$, $\chi$, and $d$ such that
 \[
\varepsilon \mathcal{Z}  \le       (1-\lambda)(\lambda+\chi)|\nabla \log v|^{2}    +\bigr{[}(1-\lambda)-\frac{d\chi^{2}}{8\lambda+4\chi} \bigr{]}\bke{\frac{u}{v^{1-\lambda}}}   +\frac{\sigma}{2} v.
 \]
It follows that
\[
\partial_{t}\mathcal{Z}-\Delta \mathcal{Z}-2\nabla \mathcal{Z}\cdot\nabla \log v+[\varepsilon \mathcal{Z}- (1-\lambda) ]\bke{ \frac{u}{v^{1-\lambda}} } 
\]\[
\le
 \frac{\varphi}{v^{1-\lambda}}+\frac{4\theta^{2}}{2\sigma v^3}|\nabla \psi|^{2}-\frac{\sigma}{2} v|\nabla \log v|^{2}-\frac{\sigma}{2}v+\frac{\sigma}{2}\psi.
\]Note that $v\ge \eta_{1}>0$ by $\eqref{LEM2_LE}$   and ${\displaystyle\inf_{s\ge0}}  \int_{\Omega}\psi(\cdot,s)>0$.
Using this lower bound and Assumption~\ref{MODEL012}, we   compute the right-hand side as   
\begin{align*}
& \frac{\varphi}{v^{1-\lambda}}+\frac{4\theta^{2}}{2\sigma v^3}|\nabla \psi|^{2}-\frac{\sigma}{2} v|\nabla \log v|^{2}-\frac{\sigma}{2}v+\frac{\sigma}{2}\psi
\\
&\le  \frac{\|\varphi\|_{L^{\infty}(\Omega\times(0,\infty))}}{\eta_{1}^{1-\lambda}}+\frac{4\theta^{2}}{2\sigma \eta_{1}^3}\|\nabla\psi\|_{L^{\infty}(\Omega\times(0,\infty))}^{2}-\frac{\sigma}{2} \eta_{1}|\nabla \log v|^{2}-\frac{\sigma}{2}v+\frac{\sigma}{2}\|\psi\|_{L^{\infty}(\Omega\times(0,\infty))}
\\
&=    C_{2}-\frac{\sigma}{2} \eta_{1}|\nabla \log v|^{2}-\frac{\sigma}{2}v, 
\end{align*}
where $C_{2} $ is a positive  constant  independent of $t$. We observe that
\begin{equation}\label{DET0}
\partial_{t}\mathcal{Z}-\Delta \mathcal{Z}-2\nabla \mathcal{Z}\cdot\nabla \log v
\le  C_{2}-\bket{[\varepsilon \mathcal{Z}- (1-\lambda) ]\bke{ \frac{u}{v^{1-\lambda}} } + \frac{\sigma}{2} \eta_{1}|\nabla \log v|^{2}+\frac{\sigma}{2}v }.
\end{equation}
Now, suppose that  there exists  $ x_{M} \in\overline{\Omega}$ such that
\begin{equation}\label{DET1}
\mathcal{Z}(x_M,t_M)=2\max\bket{ \| \mathcal{Z}_{0}\|_{L^{\infty}(\Omega)},\, \frac{2-\lambda}{\varepsilon},\, C_{2},\,   C_{2}\frac{2\theta}{ \sigma \eta_{1}}}
\end{equation}
for the first time $t_M>0$. Note that $t_M<T_{\rm max}$ by \eqref{LEM_LE2E}. Note also that the right-hand side of \eqref{DET0} is negative at $(x,t)=(x_M,t_M)$. Indeed,  
\[
C_{2} -\bket{ [\varepsilon \mathcal{Z}- (1-\lambda) ]\bke{ \frac{u}{v^{1-\lambda}} } + \frac{\sigma}{2} \eta_{1}|\nabla \log v|^{2}+\frac{\sigma}{2}v}\qquad\qquad\qquad\qquad\qquad\qquad\qquad\qquad  
\]\[
 \le C_{2}-\bket{ \bke{ \frac{u}{v^{1-\lambda}} } + \frac{\sigma}{2} \eta_{1}|\nabla \log v|^{2}+\frac{\sigma}{2}v}\qquad\qquad\qquad\qquad\qquad\qquad\qquad\qquad\qquad\qquad
\]\[
 \le  \begin{cases}
C_{2}-\displaystyle\bket{ \bke{ \frac{u}{v^{1-\lambda}} } + \theta|\nabla \log v|^{2}+\frac{\sigma}{2}v}=C_{2}-\mathcal{Z},\qquad\qquad\qquad\quad\,\,\,\,\,\,\, \mbox{ if }\,\, \theta\le \frac{\sigma}{2}\eta_{1},\\
C_{2}-\displaystyle\bket{ \frac{\frac{\sigma}{2} \eta_{1}}{\theta}\bke{ \frac{u}{v^{1-\lambda}} } + \frac{\sigma}{2} \eta_{1}|\nabla \log v|^{2}+\frac{\frac{\sigma}{2} \eta_{1}}{\theta}\frac{\sigma}{2}v}=C_{2}-\frac{\frac{\sigma}{2} \eta_{1}}{\theta}\mathcal{Z},\quad \mbox{ if }\,\, \theta> \frac{\sigma}{2}\eta_{1},
\end{cases}
\]
which is strictly negative for $(x,t)=(x_M,t_M)$.
If $ x_{M}$ is  an interior point of  $  \Omega  $, then    
\[
\partial_{t}\mathcal{Z}-\Delta \mathcal{Z}-2\nabla \mathcal{Z}\cdot\nabla \log v\ge0\qquad\mbox{ for } (x,t)=(x_{M},t_{M})
\]
and thus, this with  \eqref{DET0}  
leads to the contradiction.
Let $ x_{M}\in\partial \Omega$. Then, by Hopf's lemma type argument,  
$\frac{\partial \mathcal{Z}}{\partial\nu}$ is strictly positive at $(x,t)=(x_M,t_M)$ but  this leads to the contradiction because   $\frac{\partial \mathcal{Z}}{\partial\nu}\le 0$ on $\partial\Omega$ for $t<T_{\rm max}$.
Therefore, there is no such  $(x_M,t_{M})\in \overline{\Omega}\times(0,T_{\rm max})$ satisfying \eqref{DET1}. Thus,
\[
\|\mathcal{Z}(\cdot,t)\|_{L^{\infty}(\Omega)}< 2\max\bket{ \| \mathcal{Z}_{0}\|_{L^{\infty}(\Omega)},\, \frac{2-\lambda}{\varepsilon},\, C_{2},\,   C_{2}\frac{2\theta}{ \sigma \eta_{1}}},\qquad  t<T_{\rm max},
\]
namely,   \eqref{PR1_3} is obtained.
 
$\bullet$    $(iii) \, \, \chi< \chi_{d,\lambda}$ and   $(\varphi(x,s),\psi(x,s))=(0,b(s))$ for all $x\in \Omega$, $s>0$. \\
As in \eqref{Prop1ii},  by \eqref{UVEQ2}--\eqref{UVEQ3}, we have
\begin{align*}
\partial_{t}&z-\Delta z-2\nabla z\cdot\nabla \log v 
\\
&+(1-\lambda)(\lambda+\chi)\bke{ \frac{u}{v^{1-\lambda}} }|\nabla \log v|^{2}+\bigr{[}(1-\lambda)-\frac{d\chi^{2}}{8\lambda+4\chi} \bigr{]}\bke{\frac{u}{v^{1-\lambda}}}^{2} 
\\
&+\sigma  v\bke{ \frac{u}{v^{1-\lambda}} }+(1-\lambda)  \bke{ \frac{u}{v^{1-\lambda}} } \frac{\psi }{v}+2\theta\frac{\psi }{v}|\nabla \log v|^{2}
\\
&\le 
 (1-\lambda) \bke{ \frac{u}{v^{1-\lambda}} }
\end{align*}
and thus,
\begin{align*}
\partial_{t}&z-\Delta z-2\nabla z\cdot\nabla \log v 
\\
&+\bkt{(1-\lambda)(\lambda+\chi) |\nabla \log v|^{2}+\bigr{[}(1-\lambda)-\frac{d\chi^{2}}{8\lambda+4\chi} \bigr{]}\bke{\frac{u}{v^{1-\lambda}}}  - (1-\lambda) }\bke{\frac{u}{v^{1-\lambda}}}
\le 
0.
\end{align*}
Note from \eqref{UVEQ4} that
 there exists a positive number $\varepsilon$ depending only on $\lambda$, $\chi$, and $d$   such that
\[
\partial_{t}z-\Delta z-2\nabla z\cdot\nabla \log v +[\varepsilon z-(1-\lambda)  ]\bke{\frac{u}{v^{1-\lambda}}}
\le 0.
\]
By the maximum principle, we have
\[
\|z(\cdot,t)\|_{L^{\infty}(\Omega)}\le \max \bket{ \|z_0\|_{L^{\infty}(\Omega)},\, \frac{ 1-\lambda}{\varepsilon} },\qquad   t<T_{\rm max}.
\]
Therefore, \eqref{PR1_2} is obtained.
This  completes the proof.
\end{proof}

To obtain Theorem~\ref{THM1},  we prepare two bounds of $u$ in the following two lemmas.
First, we estimate the temporal bound of  $u$ for $\chi\le \chi_{d,\lambda}$. 
\begin{lemma}\label{LEMTHM1-1}
Let the same assumptions as in Proposition~\ref{PROP1} be satisfied. Suppose that $\chi\le \chi_{d,\lambda}$ and $T<T_{\rm max} $. Then, there exists a positive number $C$ independent of  $T$  such that
\begin{equation}\label{LEMTHM1-11}
\sup_{ t\le T}\| u(\cdot,t) \|_{L^{\infty}(\Omega)}\le Ce^{CT}.
\end{equation}
\end{lemma}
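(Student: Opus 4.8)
The plan is to obtain \eqref{LEMTHM1-11} from the control on the quotient $u/v^{1-\lambda}$ already established in Proposition~\ref{PROP1}, by writing $u=\bigl(u/v^{1-\lambda}\bigr)\,v^{1-\lambda}$ and supplying a matching upper bound for $\|v(\cdot,t)\|_{L^\infty(\Omega)}$. Since $0\le\lambda<1$ and $\chi\le\chi_{d,\lambda}$, estimate \eqref{PR1_1} gives $\|(u/v^{1-\lambda})(\cdot,t)\|_{L^\infty(\Omega)}\le Ce^{2t}$ for $t<T_{\rm max}$; because $v$ stays strictly positive on $[0,T]$ by \eqref{LEM_LE1E}, this factorization is legitimate and reduces the entire lemma to producing an upper bound of the form $\|v(\cdot,t)\|_{L^\infty(\Omega)}\le Ce^{CT}$ on $[0,T]$.

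To bound $v$ from above I would read its equation in \eqref{MODEL0} as a linear parabolic problem. Writing the reaction term as $uv^{\lambda}=\bigl(u/v^{1-\lambda}\bigr)v$, the second equation becomes $\partial_t v-\Delta v=\bigl(u/v^{1-\lambda}-1\bigr)v+\psi$, a linear equation for $v$ whose zeroth-order coefficient $u/v^{1-\lambda}-1$ is bounded above by $Ce^{2t}$ thanks to \eqref{PR1_1}, and whose source $\psi$ is bounded by Assumption~\ref{MODEL012}. I would then compare $v$ with the spatially homogeneous supersolution $\bar v(t)$ solving the ordinary differential equation $\bar v'(t)=\bigl(Ce^{2t}-1\bigr)\bar v(t)+\|\psi\|_{L^{\infty}}$ with $\bar v(0)=\|v_0\|_{L^\infty(\Omega)}$. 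Convexity of $\Omega$ is not needed at this step, only the homogeneous Neumann condition $\partial v/\partial\nu=0$, which guarantees that a boundary maximum of $v-\bar v$ cannot violate the differential inequality; the parabolic maximum principle then yields $\|v(\cdot,t)\|_{L^\infty(\Omega)}\le\bar v(t)$ for $t\le T$, an explicit increasing function of $t$ that is finite on $[0,T]$ and of exponential type.

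Combining the two steps gives $\|u(\cdot,t)\|_{L^\infty(\Omega)}\le \|(u/v^{1-\lambda})(\cdot,t)\|_{L^\infty(\Omega)}\,\|v(\cdot,t)\|_{L^\infty(\Omega)}^{1-\lambda}\le Ce^{2t}\,\bar v(t)^{1-\lambda}\le Ce^{CT}$ for all $t\le T$, which is \eqref{LEMTHM1-11}. The only delicate point I anticipate is the time-dependent coefficient $u/v^{1-\lambda}$ growing like $e^{2t}$ in the $v$-equation: one must keep $T<T_{\rm max}$ fixed and use that on $[0,T]$ this coefficient is dominated by a finite constant, so that the comparison ODE indeed has a solution on all of $[0,T]$. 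Everything else is a routine application of the comparison principle together with the lower bound \eqref{LEM_LE1E}, which legitimizes dividing by powers of $v$, and no structural input beyond Proposition~\ref{PROP1} is required.
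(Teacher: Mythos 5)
Your reduction $\norm{u}_{L^{\infty}}\le \norm{u/v^{1-\lambda}}_{L^{\infty}}\norm{v}_{L^{\infty}}^{1-\lambda}$ is legitimate, but the second ingredient fails quantitatively, and this is a genuine gap. The comparison ODE you set up, $\bar v'(t)=\bke{Ce^{2t}-1}\bar v(t)+\|\psi\|_{L^{\infty}}$, does \emph{not} have a solution "of exponential type": its integrating factor is $\exp\bke{\tfrac{C}{2}(e^{2t}-1)-t}$, so $\bar v(T)\sim \exp\bke{\tfrac{C}{2}e^{2T}}$ is \emph{doubly} exponential in $T$. This is unavoidable in your scheme: once you feed the coefficient bound $u/v^{1-\lambda}\le Ce^{2t}$ from \eqref{PR1_1} into a Gr\"onwall/comparison argument for $v$, the exponential of an exponential appears. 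Consequently your final chain gives at best $\sup_{t\le T}\|u(\cdot,t)\|_{L^{\infty}(\Omega)}\le C\exp\bke{Ce^{2T}}$, which does not prove the asserted bound \eqref{LEMTHM1-11}. (The comparison-principle step itself is fine; it is only the growth-rate claim that is false. A doubly exponential bound would still suffice for the downstream contradiction argument showing $T_{\rm max}=\infty$, but it does not establish the lemma as stated, and the paper's subsequent estimate \eqref{pfthm1_1} is phrased to match the single-exponential rate.)

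The paper avoids this trap by never estimating $\|v\|_{L^{\infty}}$ at all at this stage. It works directly on the $u$-equation: the damping term $-\sigma uv\le 0$ is discarded by sign in the Duhamel formula $u(t)\le e^{t\Delta}u_0+\int_0^t e^{(t-s)\Delta}\bkt{-\chi\nabla\cdot\bke{u\nabla\log v}+\varphi}(s)\,ds$, so the only way $v$ enters is through $\nabla\log v$, which by \eqref{PR1_1} grows only like $e^{t}$. Combining the heat-semigroup smoothing estimate for the divergence term with the mass bound $\|u(\cdot,t)\|_{L^{1}(\Omega)}\le \|u_0\|_{L^{1}(\Omega)}+Ct$ (obtained by integrating the $u$-equation) and the interpolation $\|u\|_{L^{2d}}\le\|u\|_{L^{1}}^{1/(2d)}\|u\|_{L^{\infty}}^{1-1/(2d)}$, one closes the estimate via Young's inequality with the sublinear exponent $1-\tfrac{1}{2d}$, and every factor grows at most like $e^{Ct}$. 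If you want to salvage your route, you would need an upper bound for $\|v\|_{L^{\infty}}$ of single-exponential type \emph{before} bounding $u$, which is exactly what your use of the $e^{2t}$-growing coefficient precludes; the structural lesson is that the $u$-equation, not the $v$-equation, is the right place to work.
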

\begin{proof}
Integrating the first equation of \eqref{MODEL0} over $\Omega$ and using a direct computation,  we have  
\[
\frac{d}{dt}\int_{\Omega}u=-\sigma\int_{\Omega}uv +\int_{\Omega}\varphi\le \int_{\Omega}\varphi\le |\Omega|\|\varphi\|_{L^{\infty}(\Omega\times(0,\infty))},\qquad t<T_{\rm max},
\]
which entails by integrating  with respect to time that
\begin{equation}\label{THM1_pf1}
\|u(\cdot,t)\|_{L^{1}(\Omega)}\le \|u_0\|_{L^{1}(\Omega)}+ |\Omega|\|\varphi\|_{L^{\infty}(\Omega\times(0,\infty))}t,\qquad t<T_{\rm max}.
\end{equation}
Using the representation formula of $u$ and 
 $- \sigma u v\le0$, we note that
\[
u(t)\le e^{t\Delta}u_0+\int_{0}^{t} e^{(t-s) \Delta }[ -\chi\nabla \cdot \bke{ u  \nabla \log v    } +\varphi](s)ds,\qquad t<T_{\rm max}.
\]
By the smoothing estimate for Neumann heat semigroup $e^{t\Delta}$ and   
\begin{equation}\label{MPINF}
\|e^{t\Delta}f\|_{L^{\infty}(\Omega)}\le \| f\|_{L^{\infty}(\Omega)}\qquad \mbox{ for } f\in L^{\infty}(\Omega),
\end{equation} we have
\[
\|u(t)\|_{L^{\infty}(\Omega)}
\le  \|u_0\|_{L^{\infty}(\Omega)}+C\int_{0}^{t}  (1+ (t-s)^{-\frac{1}{2}-\frac{d}{2}\frac{1}{2d}})\|   u  \nabla \log v (s)  \|_{L^{2d}(\Omega)}
+  \|   \varphi (s)  \|_{L^{\infty}(\Omega)}ds.
\]
Using H\"older's inequality, \eqref{PR1_1},   \eqref{THM1_pf1}, and a direct computation,  we further compute the  integral  on the right-hand side as
\[
 \int_{0}^{t}  ( 1+(t-s)^{-\frac{1}{2}-\frac{d}{2}\frac{1}{2d}})\|   u  \nabla \log v (s)  \|_{L^{2d}(\Omega)}+  \|   \varphi (s)  \|_{L^{\infty}(\Omega)}ds
\]
\[
 \le   \int_{0}^{t}  (1+(t-s)^{-\frac{3}{4} })\|   u (s)  \|_{L^{1}(\Omega)}^{\frac{1}{2d}}\|   u (s)  \|_{L^{\infty}(\Omega)}^{1-\frac{1}{2d}}\|    \nabla \log v (s)  \|_{L^{\infty}(\Omega)}ds+\|\varphi\|_{L^{\infty}(\Omega\times(0,\infty))}t
\]
 \[
 \le  C \int_{0}^{t}  (1+ (t-s)^{-\frac{3}{4} })(\|u_0\|_{L^{1}(\Omega)}+ |\Omega|\|\varphi\|_{L^{\infty}(\Omega\times(0,\infty))}s)^{\frac{1}{2d}}  e^{s}ds\sup_{  s\le t}\| u(s)\|_{L^{\infty}(\Omega)}^{1-\frac{1}{2d}}+Ct
\]
\[
\le C(1+t^{1+\frac{1}{2d}})e^{t}\sup_{ s\le t}\| u(s)\|_{L^{\infty}(\Omega)}^{1-\frac{1}{2d}}+Ct,
\]
\[
\le Ce^{2t}\sup_{ s\le t}\| u(s)\|_{L^{\infty}(\Omega)}^{1-\frac{1}{2d}}+Ct,
\]
where $C$ is a positive constant independent of $t$. Combining above estimates and taking supremum over $0\le t\le T$, we have  
\[
\sup_{t\le T}\|u( t)\|_{L^{\infty}(\Omega)}\le  Ce^{2T}\sup_{ t\le T}\| u( t)\|_{L^{\infty}(\Omega)}^{1-\frac{1}{2d}}+C(1+T).
\]
This yields by Young's inequality and  $1+T\le e^{T}$
that  \eqref{LEMTHM1-11}. This completes the proof.
\end{proof}

Next,  we obtain the uniform bound of  $u$ for $\chi< \chi_{d,\lambda}$ when \eqref{THM1ASS1} holds.
\begin{lemma}\label{LEMTHM1-2}
Let the same assumptions as in  Proposition~\ref{PROP1} be satisfied. Assume that $\chi< \chi_{d,\lambda}$ and  \eqref{THM1ASS1} holds. Then,   there exists a positive constant $C$    independent of $T<T_{\rm max}$ satisfying 
\[
\sup_{ t\le T}\| u(\cdot,t) \|_{L^{\infty}(\Omega)}\le C.
\]
\end{lemma}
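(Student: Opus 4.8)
The plan is to upgrade the uniform control of $\frac{u}{v^{1-\lambda}}$ and $\nabla\log v$ furnished by Proposition~\ref{PROP1}(ii)--(iii) to a genuine $t$-uniform $L^\infty$ bound for $u$ itself, essentially by rerunning the variation-of-constants argument of Lemma~\ref{LEMTHM1-1} but replacing every time-growing factor by a time-uniform one. Under each alternative in \eqref{THM1ASS1}, Proposition~\ref{PROP1} (via \eqref{PR1_3} or \eqref{PR1_2}) supplies a constant $C$ with $\|\nabla\log v(\cdot,t)\|_{L^\infty(\Omega)}\le C$ for all $t<T_{\rm max}$; this is precisely the quantity that only admitted the growing bound $Ce^{t}$ in Lemma~\ref{LEMTHM1-1}.

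First I would record a uniform $L^1$ bound for $u$. Integrating the first equation of \eqref{MODEL0} over $\Omega$ gives $\frac{d}{dt}\int_\Omega u = -\sigma\int_\Omega uv + \int_\Omega\varphi$. In the first case of \eqref{THM1ASS1} the hypothesis $\inf_{t\ge0}\int_\Omega\psi>0$ lets Lemma~\ref{UNIFLOWV} supply $v\ge\eta_1>0$, so $-\sigma uv\le-\sigma\eta_1 u$ and an ODE comparison yields $\|u(\cdot,t)\|_{L^1(\Omega)}\le\max\{\|u_0\|_{L^1(\Omega)},\,|\Omega|\sigma^{-1}\eta_1^{-1}\|\varphi\|_{L^\infty(\Omega\times(0,\infty))}\}$, uniform in $t$. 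In the second case $\varphi\equiv0$, so $\frac{d}{dt}\int_\Omega u\le0$ and $\|u(\cdot,t)\|_{L^1(\Omega)}\le\|u_0\|_{L^1(\Omega)}$ regardless of the sign of $\sigma$.

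Next comes the $L^\infty$ estimate through the Duhamel formula, and here the two cases call for two slightly different decay mechanisms. In the first case I would absorb the damping into the semigroup: from $-\sigma uv\le-\sigma\eta_1 u$ one gets $u(t)\le e^{t(\Delta-\sigma\eta_1)}u_0+\int_0^t e^{(t-s)(\Delta-\sigma\eta_1)}[-\chi\nabla\cdot(u\nabla\log v)+\varphi](s)\,ds$, and the factor $e^{-\sigma\eta_1(t-s)}$ now renders both the forcing integral $\int_0^t e^{-\sigma\eta_1(t-s)}\|\varphi\|_{L^\infty}\,ds\le\|\varphi\|_{L^\infty}/(\sigma\eta_1)$ and the drift integral $\int_0^t e^{-\sigma\eta_1(t-s)}(1+(t-s)^{-3/4})\|u\nabla\log v\|_{L^{2d}}\,ds$ finite and $t$-independent, since $\int_0^\infty e^{-\sigma\eta_1 r}(1+r^{-3/4})\,dr<\infty$. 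In the second case there is no forcing, so instead I would exploit that the chemotactic term is in divergence form: $e^{(t-s)\Delta}\nabla\cdot(u\nabla\log v)$ has zero spatial mean and hence obeys the Neumann-semigroup smoothing estimate with an additional exponential factor $e^{-\mu(t-s)}$ governed by the first nonzero eigenvalue, which again makes the drift integral converge uniformly; the term $e^{t\Delta}u_0$ is controlled by \eqref{MPINF}.

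In both cases, writing $M(T)=\sup_{t\le T}\|u(\cdot,t)\|_{L^\infty(\Omega)}$ and interpolating $\|u\|_{L^{2d}}\le\|u\|_{L^1}^{1/2d}\|u\|_{L^\infty}^{1-1/2d}$ with the uniform $L^1$ and $\|\nabla\log v\|_{L^\infty}$ bounds reduces the estimate to $M(T)\le C+CM(T)^{1-1/2d}$; Young's inequality then absorbs the superlinear term and gives $M(T)\le C$ with $C$ independent of $T$, which is the assertion. The main obstacle is exactly the loss of uniformity in Lemma~\ref{LEMTHM1-1}: the linearly growing $L^1$ bound and the $e^{t}$ growth of $\|\nabla\log v\|_{L^\infty}$ must both be removed, and the non-decaying forcing $\varphi$ in the first case has to be tamed by the damping $-\sigma uv$ rather than by time integration. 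I expect the only delicate points to be verifying the convergence of the relevant temporal integrals and invoking the mean-zero exponential smoothing estimate in the forcing-free case.
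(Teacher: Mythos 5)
Your proposal is correct, but it diverges from the paper's proof in an instructive way, chiefly in the first case of \eqref{THM1ASS1}. There, the paper's argument is a one-liner: the estimate \eqref{PR1_3} of Proposition~\ref{PROP1}(ii) already contains a \emph{uniform bound on $\|v\|_{L^{\infty}(\Omega)}$ itself} (not just on $\nabla\log v$), so one simply writes $\|u\|_{L^{\infty}(\Omega)}\le \|\tfrac{u}{v^{1-\lambda}}\|_{L^{\infty}(\Omega)}\,\|v\|_{L^{\infty}(\Omega)}^{1-\lambda}\le C$ and is done. You overlooked this component of \eqref{PR1_3} and instead reran the full Duhamel machinery, importing decay through the shifted semigroup $e^{\tau(\Delta-\sigma\eta_{1})}$ after invoking Lemma~\ref{UNIFLOWV}; this is valid (dropping $\sigma\eta_{1}u-\sigma uv\le0$ under the positivity-preserving semigroup is legitimate, and the ODE comparison for the $L^{1}$ bound is fine), but it does considerable redundant work — its only advantage is that it uses merely the $\nabla\log v$ part of \eqref{PR1_3}. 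In the second case your argument is essentially the paper's: a uniform $L^{1}$ bound (which is \eqref{THM1_pf1} with $\varphi=0$), the variation-of-constants formula, the interpolation $\|u\|_{L^{2d}}\le\|u\|_{L^{1}}^{1/2d}\|u\|_{L^{\infty}}^{1-1/2d}$ together with \eqref{PR1_2}, and Young's inequality to absorb $M(T)^{1-1/2d}$. The only difference is the decay mechanism: the paper adds $u$ to both sides and works with $e^{\tau(\Delta-1)}$, at the cost of estimating one extra term $\int_0^t e^{-(t-s)}(1+(t-s)^{-1/4})\|u(s)\|_{L^{2d}}\,ds$, whereas you invoke the spectral-gap decay $e^{-\lambda_{1}\tau}$ in the smoothing estimate for $e^{\tau\Delta}\nabla\cdot$ (a standard Neumann-semigroup lemma, justified here since $u\nabla\log v\cdot\nu=0$ on $\partial\Omega$ makes the divergence mean-free). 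Both mechanisms yield the same time-uniform convergence of the temporal integrals, so either version closes the argument.
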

\begin{proof}
Let $\chi< \chi_{d,\lambda}$. In   the case where  $\sigma>0$ and  ${\displaystyle\inf_{t\ge0}}  \int_{\Omega} \psi(\cdot,t) >0$,   
it   follows by  \eqref{PR1_3} in Proposition~\ref{PROP1} that 
\[
 \norm{u}_{L^{\infty}((0,T_{\rm max}); L^{\infty}(\Omega))}
  \le   \norm{\frac{u}{v^{1-\lambda}} }_{L^{\infty}((0,T_{\rm max}); L^{\infty}(\Omega))} \norm{v}_{L^{\infty}((0,T_{\rm max}); L^{\infty}(\Omega))}^{1-\lambda}\le C.
  \]
Next, in the case of
 $(\varphi(x,t),\psi(x,t))=(0,b(t))$ for all $x\in \Omega$, $t>0$, we note that
 \[
 \partial_{t}u+u-\Delta u=-\chi\nabla \cdot(u\nabla\log v)-\sigma uv+\varphi+u\le -\chi\nabla \cdot(u\nabla\log v)+u
 \]
and thus, 
\[
u(t)\le e^{t(\Delta-1)}u_0+\int_{0}^{t} e^{(t-s) (\Delta-1) }[ -\chi\nabla \cdot \bke{ u  \nabla \log v    } +u ](s)ds,\qquad t<T_{\rm max}.
\]
By  the smoothing estimate for $e^{t\Delta}$, 
 \eqref{PR1_2},  \eqref{THM1_pf1} with $\varphi=0$, \eqref{MPINF}, and H\"older's inequality we obtain 
 \begin{align*}
 \|u(t)\|_{L^{\infty}(\Omega)}
 &\le  \|u_0\|_{L^{\infty}(\Omega)}+C\int_{0}^{t} e^{-(t-s)}(1+ (t-s)^{-\frac{1}{2}-\frac{d}{2}\frac{1}{2d}})\|   u  \nabla \log v (s)  \|_{L^{2d}(\Omega)}ds
\\
&\quad+ C\int_{0}^{t} e^{-(t-s)}  (1+(t-s)^{-\frac{d}{2}\frac{1}{2d}})\|   u (s)  \|_{L^{2d}(\Omega)}   ds
\\
&\le C+C\int_{0}^{t} e^{-(t-s)} (1+(t-s)^{-\frac{3}{4} })\|   u (s)  \|_{L^{1}(\Omega)}^{\frac{1}{2d}}\|   u (s)   \|_{L^{\infty}(\Omega)}^{1-\frac{1}{2d}}\|      \nabla \log v (s)  \|_{L^{\infty}(\Omega)}ds
\\
&\quad+ C\int_{0}^{t} e^{-(t-s)}   (1+(t-s)^{-\frac{1}{4} })\|   u (s)  \|_{L^{1}(\Omega)}^{\frac{1}{2d}}\|   u (s)   \|_{L^{\infty}(\Omega)}^{1-\frac{1}{2d}}   ds 
\\
&\le  C+C\int_{0}^{t} e^{-(t-s)} (1+(t-s)^{-\frac{3}{4} }+ (t-s)^{-\frac{1}{4} }) ds\sup_{  s\le t}\| u(s)\|_{L^{\infty}(\Omega)}^{1-\frac{1}{2d}}  
\\
&\le C+C \sup_{  s\le t}\| u(s)\|_{L^{\infty}(\Omega)}^{1-\frac{1}{2d}},\qquad t<T_{\rm max}.
\end{align*}
Taking supremum over $0\le t\le T$ and using  Young's inequality,  we have the desired uniform bound. This completes the proof.
\end{proof}

We are ready to prove Theorem~\ref{THM1}.
\begin{pfthm1}
We first obtain $T_{\rm max}=\infty$ for   $\chi\le \chi_{d,\lambda}$.  Suppose not, i.e.  $T_{\rm max}<\infty$.
We show that  for   $q>d$, there exists a positive constant $C$ independent of  $T<T_{\rm max}$ such that
 \begin{equation}\label{pfthm1_1}
\sup_{t\le T}\| v(\cdot,t) \|_{W^{1,q}(\Omega)}\le Ce^{CT}.
\end{equation}
Let $t\le T<T_{\rm max}$.  We begin with recalling   the representation formula  
 \begin{equation}\label{VREP}
 v(t)=e^{t(\Delta-1)}v_0+\int_{0}^{t}e^{(t-s)(\Delta-1)}[uv^{\lambda}+\psi](s)ds.
 \end{equation}
Using the smoothing estimate for $e^{t\Delta}$ and H\"older's inequality, we   compute
\begin{align*}
&\|v(t)\|_{W^{1,q}(\Omega)}
\\
&\le C\|v_0\|_{W^{1,q}(\Omega)}+C\int_{0}^{t}e^{-(t-s)}(1+(t-s)^{-\frac{1}{2}})[\|uv^{\lambda}(s)\|_{L^{\infty}(\Omega)}+\|\psi(s)\|_{L^{\infty}(\Omega)}]ds
\\
&\le C+C\int_{0}^{t}e^{-(t-s)}(1+(t-s)^{-\frac{1}{2}}) [\|u(s)\|_{L^{\infty}(\Omega)}\|v(s)\|_{L^{\infty}(\Omega)}^{\lambda}+  \|\psi\|_{L^{\infty}(\Omega\times(0,\infty))}]ds.
\end{align*}
Using the embedding relation $W^{1,q}(\Omega)\hookrightarrow L^{\infty}(\Omega)$, \eqref{LEMTHM1-11}, and a direct computation, 
we further compute the integral on the right hand side as
\begin{align*}
\int_{0}^{t}&e^{-(t-s)}(1+(t-s)^{-\frac{1}{2}}) [\|u(s)\|_{L^{\infty}(\Omega)}\|v(s)\|_{L^{\infty}(\Omega)}^{\lambda}+  \|\psi\|_{L^{\infty}(\Omega\times(0,\infty))}]ds
\\
&\le 
 \int_{0}^{t}e^{-(t-s)}(1+(t-s)^{-\frac{1}{2}})ds\sup_{  s \le t}  \|u(s)\|_{L^{\infty}(\Omega)} \sup_{  s \le t}  \|v(s)\|_{W^{1,q}(\Omega)}^{\lambda}+C
 \\
& \le Ce^{Ct} \sup_{  s \le t}  \|v(s)\|_{W^{1,q}(\Omega)}^{\lambda}+C,
\end{align*}
where   $C$ is a positive constant independent of $t$. Combining above estimates, taking supremum over $0\le t\le T$,  and using Young's inequality, we have \eqref{pfthm1_1}.   This leads to the contradiction to the fact, due to     the blow-up criterion \eqref{LEM_LE2E} and Lemma~\ref{LEMTHM1-1}, that  $T_{\rm max}<\infty$. Thus,   $T_{\rm max}=\infty$.

Next, we obtain \eqref{THM1_2} under $\chi< \chi_{d,\lambda}$ and \eqref{THM1ASS1}. Thanks to Lemma~\ref{LEMTHM1-2} and \eqref{PR1_3}--\eqref{PR1_2},  it is sufficient to show that $v$ has a uniform  bound.  When  $\sigma>0$ and  ${\displaystyle\inf_{t\ge0} } \int_{\Omega}\psi(\cdot,t) >0$ are satisfied, the uniform bound of $v$ is  obtained in  \eqref{PR1_3}.   In the case where  
 $(\varphi(x,t),\psi(x,t))=(0,b(t))$ for all $x\in \Omega$, $t>0$, using \eqref{MPINF}, \eqref{VREP}, and Lemma~\ref{LEMTHM1-2}, we compute
\[
\|v(t)\|_{L^{\infty}(\Omega)}
\le  \|v_0\|_{L^{\infty}(\Omega)}+C\int_{0}^{t}e^{-(t-s)} [\|uv^{\lambda}(s)\|_{L^{\infty}(\Omega)}+\|\psi(s)\|_{L^{\infty}(\Omega)}]ds
\]
\[
\le C+C\int_{0}^{t}e^{-(t-s)}  [\|u(s)\|_{L^{\infty}(\Omega)}\|v(s)\|_{L^{\infty}(\Omega)}^{\lambda}+  \|\psi\|_{L^{\infty}(\Omega\times(0,\infty))}]ds 
\]
\[
\le C+C\int_{0}^{t}e^{-(t-s)} ds   \sup_{  s \le t}\|v(s)\|_{L^{\infty}(\Omega)}^{\lambda}
\le   C+C  \sup_{  s \le t}\|v(s)\|_{L^{\infty}(\Omega)}^{\lambda},
\]
where $C$ is a positive constant independent of $t$. Then, taking supremum over time interval and using Young's inequality, we can conclude that $v$ is uniformly bounded. This completes the proof.
\end{pfthm1}

\section{Global well-posedness for small data}\label{SEC4}
In this section, we prove  Theorem~\ref{THM2} and Theorem~\ref{THM2.5}. We prepare the lower and upper estimates for $v$, when $ u \le \frac{1}{2}$ and $\displaystyle\min_{\overline{\Omega}} v_0  \ge \displaystyle\inf_{x\in\overline{\Omega},\\s\ge0} \psi(x,s) >0$.
  $v$ is bounded above and bounded below away from zero.
\begin{lemma}\label{VLEM}
Let $\Omega$ be a smooth,  bounded and convex domain of $\R^{d}$, $d\ge2$.   Suppose that  $(u_0,v_0,\varphi,\psi)$ satisfies Assumption~\ref{MODEL012}, and   $\sigma\ge0$, $0\le \lambda\le1$. Let $(u,v)$ be the solution    to \eqref{MODEL0}  given by Lemma~\ref{LEM_LE}.   If 
\[
\|u\|_{L^{\infty}(0,T_{\rm max}; L^{\infty}(\Omega))}\le\frac{1}{2}
\]
 and 
\[
\displaystyle\min_{\overline{\Omega}} v_0  \ge \displaystyle\inf_{x\in\overline{\Omega},\\s\ge0} \psi(x,s) >0,
\]
then  
\[
\displaystyle\inf_{x\in\overline{\Omega},\\s\ge0} \psi(x,s) \le v(x,t)\le 2\max\{   \|v_0\|_{L^{\infty}(\Omega)},\,\|\psi\|_{L^{\infty}(\Omega\times(0,\infty))}\},\qquad  x\in\Omega, \,\,t<T_{\rm max}. 
\]
\end{lemma}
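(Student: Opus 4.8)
The plan is to establish the two–sided bound by the maximum principle, treating the lower and upper estimates separately, and to note at the outset that the hypothesis $\|u\|_{L^{\infty}}\le\frac12$ is needed only for the upper bound: the lower bound rests solely on the strict positivity of $\psi$ together with the ordering $\min_{\overline{\Omega}}v_0\ge\inf_{x,s}\psi$.

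For the lower bound I would simply invoke the already-proved estimate \eqref{LEM2_LE2} of Lemma~\ref{UNIFLOWV}. Since $\inf_{x,s}\psi(x,s)>0$ and $\min_{\overline{\Omega}}v_0\ge\inf_{x,s}\psi$, the constant there, $\eta_2=\min\{\min_{\overline{\Omega}}v_0,\inf_{x,s}\psi\}$, equals $\inf_{x,s}\psi$, so $\inf_{x,s}\psi\le\min_{\overline{\Omega}}v(\cdot,t)$ for all $t<T_{\rm max}$, which is exactly the claimed lower inequality. Should a self-contained argument be preferred, I would set $w:=v-\eta$ with $\eta:=\inf_{x,s}\psi$; from $\partial_t v-\Delta v+v=uv^{\lambda}+\psi\ge\psi\ge\eta$ one gets $\partial_t w-\Delta w+w\ge0$, and combined with $w(\cdot,0)=v_0-\eta\ge0$ and $\partial w/\partial\nu=\partial v/\partial\nu=0$ the comparison principle gives $w\ge0$.

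For the upper bound the idea is to compare $v$ with a spatially homogeneous supersolution built from the bound $uv^{\lambda}\le\frac12 v^{\lambda}$. When $\lambda=1$ this is transparent and rigorous: the equation reads $\partial_t v-\Delta v=-(1-u)v+\psi\le-\tfrac12 v+\|\psi\|_{L^{\infty}}$, so comparing with the ODE solution $\bar v$ of $\bar v'=-\tfrac12\bar v+\|\psi\|_{L^{\infty}}$, $\bar v(0)=\|v_0\|_{L^{\infty}}$, one finds $v\le\bar v\le\max\{\|v_0\|_{L^{\infty}},2\|\psi\|_{L^{\infty}}\}\le M$, where $M:=2\max\{\|v_0\|_{L^{\infty}},\|\psi\|_{L^{\infty}}\}$. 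For general $\lambda\in[0,1]$ I would instead run a first-hitting-time argument: since $v(\cdot,0)\le\|v_0\|_{L^{\infty}}\le\frac M2<M$, if $v$ ever attained $M$ there would be a first time $t_0\in(0,T_{\rm max})$ and a maximizer $x_0$ with $v(x_0,t_0)=M$; at an interior maximizer $\Delta v\le0$ and $\partial_t v\ge0$, while at a boundary maximizer a Hopf-type argument (using $\partial v/\partial\nu=0$) excludes the crossing, so evaluating the equation at $(x_0,t_0)$ yields $0\le-M+\frac12 M^{\lambda}+\|\psi\|_{L^{\infty}}$, and with $\|\psi\|_{L^{\infty}}\le\frac M2$ this forces $M\le M^{\lambda}$.

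The step I expect to be the main obstacle is precisely this threshold inequality for the sublinear reaction $v^{\lambda}$ when $\lambda<1$. Unlike the $\lambda=1$ case, the term $uv^{\lambda}$ cannot be absorbed into a damping $-cv$ with $c$ independent of $v$, and the inequality $M\le M^{\lambda}$ closes the argument only when $M\ge1$. To cover the regime $M<1$ one must exploit the lower bound $v\ge\eta$ established above, writing $v^{\lambda}=v\,v^{\lambda-1}\le\eta^{\lambda-1}v$ to recover a genuine linear damping, or otherwise bring in the damping parameters $\sigma$ and $\eta$; this is the point at which the structural smallness hypotheses underlying Theorems~\ref{THM2} and~\ref{THM2.5} become relevant, and it is where the bulk of the care in the proof should be spent.
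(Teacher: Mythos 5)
Your treatment of the lower bound and of the case $\lambda=1$ coincides in substance with the paper's own proof: the paper likewise obtains the lower bound by citing \eqref{LEM2_LE2} of Lemma~\ref{UNIFLOWV}, and for the upper bound it rewrites the equation as $\partial_t v+(\tfrac12-\Delta)v=uv-\tfrac12 v+\psi$, discards $uv-\tfrac12 v\le 0$, and applies the Duhamel formula together with the $L^{\infty}$-contraction \eqref{MPINF} to get $\|v(t)\|_{L^{\infty}}\le \|v_0\|_{L^{\infty}}e^{-t/2}+2(1-e^{-t/2})\|\psi\|_{L^{\infty}(\Omega\times(0,\infty))}$; this is just the semigroup form of your ODE comparison and yields the same constant.

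The obstruction you isolate for $\lambda<1$ is genuine, but it is a defect of the lemma as stated rather than of your argument: the paper's proof tacitly replaces $uv^{\lambda}$ by $uv$, which is legitimate only for $\lambda=1$. Indeed, with only $u\le\tfrac12$ the conclusion is false when $\lambda<1$: take $\lambda=0$, $\sigma=0$, $\varphi=0$, $u_0\equiv\tfrac12$, $v_0\equiv\epsilon$, $\psi\equiv\tfrac{\epsilon}{2}$; the unique solution is spatially homogeneous with $u\equiv\tfrac12$ and $v'=-v+\tfrac12+\tfrac{\epsilon}{2}$, so $v$ eventually exceeds $2\epsilon=2\max\{\|v_0\|_{L^{\infty}},\|\psi\|_{L^{\infty}}\}$ whenever $\epsilon<\tfrac13$. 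The proper repair is not your substitution $v^{\lambda}\le\eta^{\lambda-1}v$ --- that produces the damping coefficient $1-\tfrac12\eta^{\lambda-1}$, which is positive only when $\eta^{1-\lambda}>\tfrac12$, and in any case degrades the constant --- but to strengthen the hypothesis to $\|u/v^{1-\lambda}\|_{L^{\infty}(0,T_{\rm max};L^{\infty}(\Omega))}\le\tfrac12$, i.e.\ $uv^{\lambda}\le\tfrac12 v$ pointwise. This costs nothing in the applications: in the proof of Theorem~\ref{THM2.5}, the only place the lemma is invoked with $\lambda<1$, one has $z=u/v^{1-\lambda}+\theta|\nabla\log v|^{2}<\delta_0\le\tfrac12$ up to the first hitting time, hence exactly $u/v^{1-\lambda}\le\tfrac12$ there; and for $\lambda=1$ it reduces to the stated hypothesis. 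Under the corrected hypothesis, either the paper's semigroup computation or your first-hitting argument closes uniformly for all $\lambda\in[0,1]$, with no case distinction on whether $M\ge1$.
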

\begin{proof}
Let $t < T_{\rm max}$. Using the representation formula for $\partial_{t}v+(\frac{1}{2}-\Delta) v=uv-\frac{1}{2}v +\psi$, and $
\|u\|_{L^{\infty}(0,T_{\rm max}; L^{\infty}(\Omega))}\le\frac{1}{2}$, we compute
\[
v(t)=e^{t(\Delta-\frac{1}{2})}v_0+\int_{0}^{t} e^{(t-s)(\Delta-\frac{1}{2})}(uv-\frac{1}{2}v+\psi)(s)ds
\]
\[
\le e^{t(\Delta-\frac{1}{2})}v_0+\int_{0}^{t} e^{(t-s)(\Delta-\frac{1}{2})}\psi(s)ds.
\]
Using \eqref{MPINF}, it follows that
\[
\norm{v(t)}_{L^{\infty}}\le \norm{v_0}_{L^{\infty}}e^{-\frac{1}{2}t}+2(1-e^{-\frac{1}{2}t})\|\psi \|_{L^{\infty}(\Omega\times(0,\infty))  }
\]
and thus,  a desired upper bound for $v$ is obtained. The desired lower bound for $v$ is a consequence of \eqref{LEM2_LE2} in Lemma~\ref{UNIFLOWV}. This completes the proof.
\end{proof}

We are ready to prove Theorem~\ref{THM2} and Theorem~\ref{THM2.5}.
Again, we use the  maximum principle.
\begin{pfthm2}     
Let  $t<T_{\rm max}$ and  $z =u+\theta|\nabla \log v|^{2}$, where   $\theta =1+\frac{\chi}{2}$.  We recall from \eqref{ZBCCON}--\eqref{UVEQ2} that $\frac{\partial z}{\partial\nu}\le0$ on $\partial\Omega$, and
\[
\partial_{t}z-\Delta z-2\nabla z\cdot\nabla \log v 
 + \sigma vu+2\theta|D^{2}\log v|^{2}+2\theta\frac{\psi }{v}|\nabla \log v|^{2}
\]
\[
=
-\chi u \Delta \log v + \varphi  +\frac{2\theta}{v}\nabla \log v\cdot \nabla \psi.
\] Using the pointwise estimate 
$
\abs{\Delta f}\le \sqrt{d}\abs{D^{2} f}$ for $f\in\mathcal{C}^{2}(\overline{\Omega})$   and Young's inequality, we can compute
\begin{equation}\label{THM2_1}
-\chi u\Delta \log v\le 2\theta|D^{2}\log v|^{2}+\frac{d\chi^{2}}{8\theta}u^{2},
\end{equation}
and
\begin{equation}\label{THM2_2}
\frac{2\theta}{v}\nabla \log v\cdot \nabla \psi 
\le \theta\frac{\psi }{v}|\nabla \log v|^{2}+ \theta \frac{ |\nabla \psi |^{2} }{ \psi v  }.
\end{equation}
Thus, we have
\[
\partial_{t}z-\Delta z-2\nabla z\cdot\nabla \log v 
 +\sigma vu+\theta\frac{\psi }{v}|\nabla \log v|^{2}
\le \frac{d\chi^{2}}{8\theta}u^{2}+\varphi + \theta \frac{ |\nabla \psi |^{2} }{ \psi v  },
\]
and which entails by $v\ge \eta$, $\psi\ge \eta$, and $u\le z$ that
\[
\partial_{t}z-\Delta z-2\nabla z\cdot\nabla \log v +\eta \bke{ \sigma  -\frac{d\chi^{2}}{8\theta\eta }z }u
+\frac{\eta }{v}\theta|\nabla \log v|^{2}
\le   \varphi + \theta \frac{ |\nabla \psi |^{2} }{ \psi \eta   }.
\]
Note that  $z_0<\delta=\min\bket{\frac{1}{2},  \frac{ 4\sigma\theta\eta }{ d\chi^{2}} }$ by \eqref{ASSTHM21}. Now, 
suppose that there exists $x_{\delta} \in\overline{\Omega} $ such that
$z(x_{\delta},t_{\delta})=\delta$ for the first time $t_{\delta}<T_{\rm max}$.
 Using the definition of $\delta$ and Lemma~\ref{VLEM}, we have for $(x,t)=(x_{\delta},t_{\delta})$ that
\begin{align*}
\eta & \bke{  \sigma-\frac{d\chi^{2}}{8\theta \eta }z   }u
+\frac{ \eta }{v}\theta|\nabla \log v|^{2}
\\
&\ge \frac{\eta }{2}\sigma u
+\frac{ \eta }{2\max\{   \|v_0\|_{L^{\infty}(\Omega)},\,\|\psi\|_{L^{\infty}(\Omega\times(0,\infty))}\}}\theta|\nabla \log v|^{2}
\\
&\ge \frac{\eta }{2}\min\bket{ \sigma,\,\frac{1}{ \max\{   \|v_0\|_{L^{\infty}(\Omega)},\,\|\psi\|_{L^{\infty}(\Omega\times(0,\infty))}\}} }z
\\
&=\frac{\eta }{2}\frac{1}{\max\{  \sigma^{-1},\,    \|v_0\|_{L^{\infty}(\Omega)},\,\|\psi\|_{L^{\infty}(\Omega\times(0,\infty))}   \}}\delta
\end{align*}
which yields for $(x,t)=(x_{\delta},t_{\delta})$ that
\begin{align}
\begin{aligned}\label{THM2PF}
\partial_{t}&z-\Delta z-2\nabla z\cdot\nabla \log v 
\\
&\le 
  \varphi  +\frac{4\theta}{\eta } |\nabla \sqrt{\psi}|^{2} -\frac{\eta }{2}\frac{\delta}{\max\{ \sigma^{-1},\,    \|v_0\|_{L^{\infty}(\Omega)},\,\|\psi\|_{L^{\infty}(\Omega\times(0,\infty))}   \}}.
\end{aligned}
\end{align}
Note that by  \eqref{ASSTHM22}, the right-hand side is strictly negative. If $x_{\delta}$ is   an interior point of $\Omega$, then  the left-hand side of \eqref{THM2PF} is nonnegative.
Thus, $x_{\delta}$ is not an interior point of $\Omega$.  Let $x_{\delta}\in \partial \Omega$. Then, by Hopf's lemma type argument,  $\frac{\partial z}{\partial\nu}>0$ at $(x,t)=(x_{\delta},t_{\delta})$ but again, this leads to the contradiction since
 $\frac{\partial z}{\partial\nu}\le0$ on $\partial\Omega$.
 Therefore, $z<\delta$ for $t<T_{\rm max}$. Since  $u\le z$,  $v$ has a uniform bound    by Lemma~\ref{VLEM}, and $\nabla v=v\nabla \log v\le vz^{\frac{1}{2}}$   also has a uniform  bound. Then,  by \eqref{LEM_LE2E} and $\|\nabla v\|_{L^{q}(\Omega)}\le C\|\nabla v\|_{L^{\infty}(\Omega)}$,  we obtain $T_{\rm max}=\infty$ and \eqref{THM1_2}. This completes the proof.   
\end{pfthm2}
\begin{pfthm3} 
The proof is similar to the proof of Theorem~\ref{THM2}.  Let  $t<T_{\rm max}$ and  $z =\frac{u}{v^{1-\lambda}}+\theta|\nabla \log v|^{2}$, where   $\theta=\lambda+\frac{\chi}{2}$.  
Again, we recall from \eqref{ZBCCON}--\eqref{UVEQ2} that $\frac{\partial z}{\partial\nu}\le0$ on $\partial\Omega$, and
\[
\partial_{t}z-\Delta z-2\nabla z\cdot\nabla \log v 
+(1-\lambda)(\lambda+\chi)\bke{ \frac{u}{v^{1-\lambda}} }|\nabla \log v|^{2}+(1-\lambda) \bke{\frac{u}{v^{1-\lambda}}}^{2}
\]
 \[
 +\sigma  v\bke{ \frac{u}{v^{1-\lambda}} }+(1-\lambda)  \bke{ \frac{u}{v^{1-\lambda}} } \frac{\psi }{v}+2\theta|D^{2}\log v|^{2}+2\theta\frac{\psi }{v}|\nabla \log v|^{2}
 \]
\[
=
-\chi\bke{\frac{u}{v^{1-\lambda}}}\Delta \log v+(1-\lambda) \bke{ \frac{u}{v^{1-\lambda}} }+\frac{\varphi }{v^{1-\lambda}} +\frac{2\theta}{v}\nabla \log v\cdot \nabla \psi. 
\]
By \eqref{THM2_1}--\eqref{THM2_2}, we have
\begin{align*}
\partial_{t}&z-\Delta z-2\nabla z\cdot\nabla \log v 
+(1-\lambda)(\lambda+\chi)\bke{ \frac{u}{v^{1-\lambda}} }|\nabla \log v|^{2} 
\\
& +\sigma  v\bke{ \frac{u}{v^{1-\lambda}} }+(1-\lambda)  \bke{ \frac{u}{v^{1-\lambda}} } \frac{\psi }{v} +\theta\frac{\psi }{v}|\nabla \log v|^{2}
\\
&\le 
\bke{    \frac{d\chi^2}{8\theta} -(1-\lambda)        }\bke{ \frac{u}{v^{1-\lambda}} }^2
+(1-\lambda) \bke{ \frac{u}{v^{1-\lambda}} }+\frac{\varphi }{v^{1-\lambda}} + \theta \frac{  |\nabla \psi|^2  }{\psi v}, 
\end{align*}
  which entails by $v\ge \eta$, $\psi\ge \eta$ and $\frac{u}{v^{1-\lambda}}\le z$ that
\begin{align*}
\partial_{t}&z-\Delta z-2\nabla z\cdot\nabla \log v 
\\
& +\bkt{\sigma  \eta-(1-\lambda) -\bke{    \frac{d\chi^2}{8\theta} -(1-\lambda)        }z   }\bke{ \frac{u}{v^{1-\lambda}} }+ \theta\frac{\eta }{v}|\nabla \log v|^{2}
\\
&\le 
 \frac{\varphi }{\eta^{1-\lambda}} + \frac{4\theta}{\eta } |\nabla \sqrt{\psi}|^{2}.
\end{align*}
Now, note from \eqref{THM3_2} that  
\[
z_0<\delta_{0}= \min\bket{\frac{1}{2},\,\,  \frac{\sigma \eta}{2}\bke{  \frac{d\chi^2}{8\theta}-(1-\lambda)    }^{-1} }.
\]   As in the proof of Theorem~\ref{THM2}, we show   $z<\delta_{0}$ for $t<T_{\rm max}$. 
Suppose that there exists $(x_{\delta},t_{\delta})\in\overline{\Omega}\times(0,T_{\rm max})$ such that
$z(x_{\delta},t_{\delta})=\delta_{0}$ for the first time. Using the definition of $\delta_{0}$, $\sigma\eta\ge 4(1-\lambda)$,  and Lemma~\ref{VLEM}, we have for $(x,t)=(x_{\delta},t_{\delta})$ that 
\begin{align*}
 &\bkt{\sigma  \eta-(1-\lambda) -\bke{    \frac{d\chi^2}{8\theta} -(1-\lambda)        }z   }\bke{ \frac{u}{v^{1-\lambda}} }+ \theta\frac{\eta }{v}|\nabla \log v|^{2}
\\
& \ge  \bkt{\sigma  \eta-(1-\lambda) -\frac{\sigma \eta}{2}   }\bke{ \frac{u}{v^{1-\lambda}} }+ \theta\frac{\eta }{v}|\nabla \log v|^{2}
\\
& \ge \frac{\eta\sigma}{ 4} \bke{ \frac{u}{v^{1-\lambda}} }+\frac{ \eta }{2\max\{   \|v_0\|_{L^{\infty}(\Omega)},\,\|\psi\|_{L^{\infty}(\Omega\times(0,\infty))}\}}\theta|\nabla \log v|^{2}
\\
& \ge \frac{\eta }{2}\min\bket{ \frac{\sigma}{2},\,\frac{1}{ \max\{   \|v_0\|_{L^{\infty}(\Omega)},\,\|\psi\|_{L^{\infty}(\Omega\times(0,\infty))}\}} }z
\\
& =\frac{\eta }{2}\frac{1}{\max\{ 2 \sigma^{-1},\,    \|v_0\|_{L^{\infty}(\Omega)},\,\|\psi\|_{L^{\infty}(\Omega\times(0,\infty))}   \}}\delta_{0},
\end{align*}
which implies for $(x,t)=(x_{\delta},t_{\delta})$ that
\begin{align*}
\partial_{t}&z-\Delta z-2\nabla z\cdot\nabla \log v
\\
&\le 
 \frac{\varphi }{\eta^{1-\lambda}} + \frac{4\theta}{\eta } |\nabla \sqrt{\psi}|^{2} -\frac{\eta }{2}\frac{\delta_{0}}{\max\{ 2\sigma^{-1},\,    \|v_0\|_{L^{\infty}(\Omega)},\,\|\psi\|_{L^{\infty}(\Omega\times(0,\infty))}   \}}.
\end{align*}
 Then, the right-hand side is negative by \eqref{THM3_3}.  
  The remaining part of the proof is very similar to that of Theorem~\ref{THM2}, and thus the details are omitted.
 This completes the proof.
\end{pfthm3}

 \section{ Long-time asymptotics }\label{SEC5}
 In this section, we prove  the long time behavior result, Theorem~\ref{THM3}.  To this end, first, we estimate  fractional norms  of $u$ and $v$ and using them, we 
 compute $L^{\infty}$-norms of  $u$ and $v$.
  Below, $A$
denotes the sectorial realization of $-\Delta+1$ in $L^{r}(\Omega)$ with $1<r<\infty$  under homogeneous Neumann boundary condition, and  $A^{\beta}$ with $\beta\in(0,1)$ denotes the fractional power of $A$(see, e.g. \cite[Section 1.4]{H81}).  
\begin{lemma}\label{LEM51}
Let $\Omega$ be a smooth, bounded and convex domain of $\R^{d}$, $d\ge2$, and   $\sigma>0$, $0\le \lambda\le 1$. Suppose that $(u, v)$ is a unique  global classical solution to  \eqref{MODEL0}
satisfying \eqref{THM1_2}. Then, for $r>d$ and $\frac{d}{2r}<\beta<\frac{1}{2}$, there exist a positive constant   $C$ independent of $t$ such that
\begin{equation}\label{LEM101}
\|A^{\beta}u(\cdot,t)\|_{L^{r}( \Omega )}+\|A^{\beta}v(\cdot,t)\|_{L^{r}( \Omega )}\le C\qquad \mbox{ for all }  t>0.
\end{equation}
Moreover, there exist  $\gamma\in(0,1)$ and a positive constant   $C$ independent of $t$ such that
\begin{equation}\label{LEM102}
\|u(\cdot,t)\|_{L^{\infty}( \Omega )}\le C\|u(\cdot,t)\|_{L^{1}( \Omega )}^{\frac{\gamma}{\gamma+r(1-\gamma)}}\qquad \mbox{ for all }  t>0,
\end{equation}
\begin{equation}\label{LEM103}
\|v(\cdot,t)-v_{\infty}\|_{L^{\infty}( \Omega )}\le C\|v(\cdot,t)-v_{\infty}\|_{L^{2}( \Omega )}^{\frac{2\gamma}{2\gamma+r(1-\gamma)}}\qquad \mbox{ for all }  t>0,
\end{equation}
where $ v_{\infty}$ denotes the solution for \eqref{VINFMODEL}.
\end{lemma}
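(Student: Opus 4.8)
The plan is to treat \eqref{LEM101} by a variation-of-constants representation together with the standard smoothing estimates for the analytic semigroup generated by $-A=\Delta-1$, and then to deduce \eqref{LEM102}--\eqref{LEM103} from \eqref{LEM101} by combining a fractional Gagliardo--Nirenberg interpolation with an elementary interpolation of Lebesgue norms. First I would rewrite the two equations of \eqref{MODEL0} as
\[
\partial_t u+Au=u-\chi\nabla\cdot\bke{u\nabla\log v}-\sigma uv+\varphi,\qquad \partial_t v+Av=uv^{\lambda}+\psi,
\]
so that Duhamel's formula gives $u(t)=e^{-tA}u_0+\int_0^t e^{-(t-s)A}g_u(s)\,ds$ and likewise for $v$, where $g_u$ and $g_v$ denote the respective right-hand sides. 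Applying $A^{\beta}$ and using that $u_0,v_0\in D(A^{\beta})$ (by Assumption~\ref{MODEL012}), the initial-data contributions are bounded by $e^{-\nu t}\norm{A^{\beta}u_0}_{L^r(\Omega)}$ and $e^{-\nu t}\norm{A^{\beta}v_0}_{L^r(\Omega)}$, hence uniformly in $t$.

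For the Duhamel integrals I would invoke the two smoothing estimates $\norm{A^{\beta}e^{-tA}f}_{L^r(\Omega)}\le C t^{-\beta}e^{-\nu t}\norm{f}_{L^r(\Omega)}$ and $\norm{A^{\beta}e^{-tA}\nabla\cdot f}_{L^r(\Omega)}\le C t^{-\beta-\frac12}e^{-\nu t}\norm{f}_{L^r(\Omega)}$, valid with some $\nu>0$ because the bottom of the spectrum of $A$ is positive. The zeroth-order contributions $u$, $\sigma uv$, $\varphi$ (for $u$) and $uv^{\lambda}$, $\psi$ (for $v$) are all bounded in $L^r(\Omega)$ uniformly in time thanks to \eqref{THM1_2} and Assumption~\ref{MODEL012}, so the corresponding integrals are controlled by $\int_0^{\infty}\tau^{-\beta}e^{-\nu\tau}\,d\tau<\infty$ (finite since $\beta<1$). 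The only delicate term is the drift $-\chi\nabla\cdot(u\nabla\log v)$: here I would use the gradient estimate together with the bound $\norm{u\nabla\log v}_{L^r(\Omega)}\le\norm{u}_{L^{\infty}(\Omega)}\norm{\nabla\log v}_{L^r(\Omega)}\le C$ from \eqref{THM1_2}, so that its contribution is dominated by $\int_0^{\infty}\tau^{-\beta-\frac12}e^{-\nu\tau}\,d\tau$, which is finite precisely because $\beta<\frac12$. Collecting these bounds yields \eqref{LEM101}.

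To pass from \eqref{LEM101} to \eqref{LEM102}--\eqref{LEM103} I would fix $\beta'\in(\tfrac{d}{2r},\beta)$, use the embedding $D(A^{\beta'})\hookrightarrow L^{\infty}(\Omega)$ and the moment inequality $\norm{A^{\beta'}w}_{L^r(\Omega)}\le C\norm{A^{\beta}w}_{L^r(\Omega)}^{\beta'/\beta}\norm{w}_{L^r(\Omega)}^{1-\beta'/\beta}$ to obtain a Gagliardo--Nirenberg inequality $\norm{w}_{L^{\infty}(\Omega)}\le C\norm{A^{\beta}w}_{L^r(\Omega)}^{1-\gamma}\norm{w}_{L^r(\Omega)}^{\gamma}$ with $\gamma:=1-\beta'/\beta\in(0,1)$. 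Taking $w=u$ and using \eqref{LEM101} gives $\norm{u}_{L^{\infty}(\Omega)}\le C\norm{u}_{L^r(\Omega)}^{\gamma}$; inserting the elementary interpolation $\norm{u}_{L^r(\Omega)}\le\norm{u}_{L^{\infty}(\Omega)}^{(r-1)/r}\norm{u}_{L^1(\Omega)}^{1/r}$ and absorbing the $L^{\infty}$-factor on the left yields \eqref{LEM102} with exponent $\frac{\gamma}{\gamma+r(1-\gamma)}$. For \eqref{LEM103} the same argument is applied to $w=v-v_{\infty}$; since $v_{\infty}$ solves \eqref{VINFMODEL} we have $Av_{\infty}=\psi_{\infty}$, hence $A^{\beta}v_{\infty}=A^{\beta-1}\psi_{\infty}$ is bounded in $L^r(\Omega)$ and $\norm{A^{\beta}(v-v_{\infty})}_{L^r(\Omega)}\le C$, and replacing the Lebesgue interpolation by $\norm{v-v_{\infty}}_{L^r(\Omega)}\le\norm{v-v_{\infty}}_{L^{\infty}(\Omega)}^{(r-2)/r}\norm{v-v_{\infty}}_{L^2(\Omega)}^{2/r}$ produces the exponent $\frac{2\gamma}{2\gamma+r(1-\gamma)}$.

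The main obstacle is the uniform-in-time control of the drift term in the first step: keeping the estimate independent of $t$ forces the use of the exponentially decaying semigroup $e^{-tA}$ (rather than the bare heat semigroup) together with the time-singular kernel $\tau^{-\beta-1/2}$, and integrability of this kernel at $\tau=0$ is exactly what the hypothesis $\beta<\tfrac12$ guarantees, while $\beta>\tfrac{d}{2r}$ is reserved for the embedding used in the second step. The remaining estimates are routine once \eqref{THM1_2} supplies the uniform bounds on $u$, $v$, and $\nabla\log v$.
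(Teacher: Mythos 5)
Your proposal is correct and follows essentially the same route as the paper: a Duhamel representation with the exponentially decaying semigroup $e^{t(\Delta-1)}$ combined with $L^r$ smoothing estimates (with kernels $t^{-\beta}$ and $t^{-\beta-\frac12}$, the latter integrable exactly because $\beta<\tfrac12$) and the uniform bounds from \eqref{THM1_2} to get \eqref{LEM101}, then the embedding $D(A^{\beta'})\hookrightarrow L^{\infty}(\Omega)$ with $\beta'=\beta(1-\gamma)>\tfrac{d}{2r}$, the moment inequality, and elementary Lebesgue interpolation to get \eqref{LEM102}--\eqref{LEM103}. The only cosmetic differences are your handling of $A^{\beta}v_{\infty}$ via $A^{\beta-1}\psi_{\infty}$ (the paper invokes elliptic regularity for $v_{\infty}$ instead) and the bookkeeping of the initial-data terms, neither of which changes the argument.
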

 \begin{proof}
First, we show \eqref{LEM101}.
 Let $r>d\ge2$ and $\frac{d}{2r}<\beta<\frac{1}{2}$.
Using the representation  formula of 
$
u$,
we compute
\begin{align*}
&\|A^{\beta}u(t)\|_{L^{r}(\Omega)}
\\
&
\le \|A^{\beta}e^{t(\Delta-1)}u_0\|_{L^{r}(\Omega)}+\int_{0}^{t}\|A^{\beta}e^{(t-s)(\Delta-1)}\bke{-\chi\nabla \cdot \bke{ u  \nabla \log v    }+u- \sigma u v +\varphi}(s)\|_{L^{r}(\Omega)}ds
\\
&\le C\|u_{0}\|_{W^{1,r}(\Omega)}+C\int_{0}^{t}e^{-(t-s)}\bke{1+(t-s)^{-\beta-\frac{1}{2}}}\|u  \nabla \log v(s)\|_{L^{r}(\Omega)} ds
\\
&\quad+C\int_{0}^{t}e^{-(t-s)} \bke{1+(t-s)^{-\beta}} \bke{ \|u(s)\|_{L^{r}(\Omega)}+  \| u v(s)\|_{L^{r}(\Omega)} +\|\varphi(s)  \|_{L^{r}(\Omega)}}ds
\\
&\le C+C\|u  \nabla \log v\|_{L^{\infty}(\Omega\times(0,\infty))}\int_{0}^{t}e^{-(t-s)}\bke{1+(t-s)^{-\beta-\frac{1}{2}}} ds
\\
&\quad+C \bke{ \|u\|_{L^{\infty}(\Omega\times(0,\infty))}+  \| u v\|_{L^{\infty}(\Omega\times(0,\infty))} +\|\varphi   \|_{L^{\infty}(\Omega\times(0,\infty))}}\int_{0}^{t}e^{-(t-s)} \bke{1+(t-s)^{-\beta}}ds
\\
&\le C,
\end{align*}
where $C>0$ is a   constant   independent of $t$. Similarly, it follows from the representation  formula of 
$
v$ that
\begin{align*}
&\|A^{\beta}v(t)\|_{L^{r}(\Omega)}
\\
&\le \|A^{\beta}e^{t(\Delta-1)}v_0\|_{L^{r}(\Omega)}+\int_{0}^{t}\|A^{\beta}e^{(t-s)(\Delta-1)}\bke{uv^{\lambda}+\psi}(s)\|_{L^{r}(\Omega)}ds
\\
&\le C\|v_0\|_{W^{1,r}(\Omega)}+C\int_{0}^{t}e^{-(t-s)}\bke{1+(t-s)^{-\beta}}\bke{  \| uv^{\lambda}(s)\|_{L^{r}(\Omega)}+\|\psi(s) \|_{L^{r}(\Omega)}}ds
\\
&\le C +C\bke{  \| uv^{\lambda}\|_{L^{\infty}(\Omega\times(0,\infty))}+\|\psi \|_{L^{\infty}(\Omega\times(0,\infty))}}\int_{0}^{t}e^{-(t-s)}\bke{1+(t-s)^{-\beta}}ds
\\
&\le C,
\end{align*}
where $C>0$ is a   constant   independent of $t$. 
Thus, \eqref{LEM101} is obtained.

Next, we show \eqref{LEM102}--\eqref{LEM103}. Take sufficiently small $\gamma\in(0,1)$ such that $\beta(1-\gamma)>\frac{d}{2r}$.
Then, 
using $D(A^{\beta(1-\gamma)})\hookrightarrow L^{\infty}( \Omega )$, and the fractional interpolation inequality(see, e.g. \cite[p. 28]{H81}),   we have
\[
\|u\|_{L^{\infty}(\Omega)}\le C\|A^{\beta(1-\gamma)}u\|_{L^{r}(\Omega)}\le C\|A^{\beta}u\|_{L^{r}(\Omega)}^{1-\gamma}\|u\|_{L^{r}(\Omega)}^{\gamma}.
\]
It follows by \eqref{LEM101} and the interpolation inequality $\|u\|_{L^{r}(\Omega)}\le \|u\|_{L^{1}(\Omega)}^{\frac{1}{r}}\|u\|_{L^{\infty}(\Omega)}^{1-\frac{1}{r}}$ that
\[
\|u\|_{L^{\infty}(\Omega)}^{1-(1-\frac{1}{r})\gamma}\le C \|u\|_{L^{1}(\Omega)}^{\frac{\gamma}{r}}.
\]
Thus, \eqref{LEM102} is obtained. Next, we   note from  \eqref{VINFMODEL} and the standard elliptic regularity theory
 that $v_{\infty}\in W^{2,p}(\Omega)$ for any finite $p>1$. Using the same embedding relation and the   fractional interpolation inequality as above, due to  
\[
\|v(\cdot,t)-v_{\infty}\|_{L^{r}(\Omega)}\le \|v(\cdot,t)-v_{\infty}\|_{L^{2}(\Omega)}^{\frac{2}{r}}\|v(\cdot,t)-v_{\infty}\|_{L^{\infty}(\Omega)}^{1-\frac{2}{r}},
\]
we have 
\begin{align*}
&\|v(\cdot,t)-v_{\infty}\|_{L^{\infty}(\Omega)}
\\
&\le C\|A^{\beta(1-\gamma)}(v(\cdot,t)-v_{\infty})\|_{L^{r}(\Omega)}
\\
&\le C\|A^{\beta}(v(\cdot,t)-v_{\infty})\|_{L^{r}(\Omega)}^{1-\gamma}\|v(\cdot,t)-v_{\infty}\|_{L^{r}(\Omega)}^{\gamma}
\\
&\le C(\|A^{\beta}v(\cdot,t)\|_{L^{r}(\Omega)}+\|A^{\beta}v_{\infty}\|_{L^{r}(\Omega)})^{1-\gamma}\|v(\cdot,t)-v_{\infty}\|_{L^{2}(\Omega)}^{\frac{2\gamma}{r}}\|v(\cdot,t)-v_{\infty}\|_{L^{\infty}(\Omega)}^{(\gamma-\frac{2\gamma}{r})}
\\
&\le C\|v(\cdot,t)-v_{\infty}\|_{L^{2}(\Omega)}^{\frac{2\gamma}{r}}\|v(\cdot,t)-v_{\infty}\|_{L^{\infty}(\Omega)}^{(1-\frac{2}{r})\gamma}
\end{align*}
and thus, 
\[
\|v(\cdot,t)-v_{\infty}\|_{L^{\infty}(\Omega)}^{1-(1-\frac{2}{r})\gamma}\le C\|v(\cdot,t)-v_{\infty}\|_{L^{2}(\Omega)}^{\frac{2\gamma}{r}},
\]
where $C>0$ is a   constant   independent of $t$. 
Namely, \eqref{LEM103} is obtained. 
This completes the proof.
 \end{proof}
 
Now, we are ready to prove Theorem~\ref{THM3}.
Due to \eqref{ASSPHIPSI} and   $-\sigma uv$ term in $u$-equation,     $(u,v)$ converges to $(0,v_{\infty})$ as times goes to infinity.
\begin{pfthm4} 
First, we  show the convergence   $u\rightarrow0$. Integrating the first equation of \ref{MODEL0} over $\Omega$ yields
\[
\frac{d}{dt}\int_{\Omega}u+\sigma\int_{\Omega}uv=\int_{\Omega}\varphi.
\]
Using $v\ge  \eta_{0}$,    we have
\[
  \frac{d}{dt}\int_{\Omega}u+  \eta_{0}\sigma\int_{\Omega}u \le  \int_{\Omega}\varphi
\]
and thus,  
\[
\frac{d}{dt}\bke{ e^{  \eta_{0}\sigma t}   \int_{\Omega}u            } \le e^{  \eta_{0}\sigma t}\int_{\Omega}\varphi.
\]
Integrating this with respect to the temporal variable over   $(\tau,2\tau)$ for $\tau>0$ yields
\begin{align*}
e^{  \eta_{0}\sigma 2\tau}   \int_{\Omega}u(\cdot,2\tau)&\le e^{  \eta_{0}\sigma \tau}  \int_{\Omega}u(\cdot,\tau)+ \int_{\tau}^{2\tau}e^{  \eta_{0}\sigma t}\int_{\Omega}\varphi(\cdot,t)dt 
\\
&\le e^{ \eta_{0}\sigma \tau}  \int_{\Omega}u(\cdot,\tau)+ e^{  \eta_{0}\sigma 2\tau}\int_{\tau}^{2\tau}\int_{\Omega}\varphi(\cdot,t)dt
\end{align*}
Then,
\[
 \int_{\Omega}u(\cdot,2\tau)\le e^{-  \eta_{0}\sigma \tau}  \int_{\Omega}u(\cdot,\tau)+ \int_{\tau}^{\infty} \int_{\Omega}\varphi(\cdot,t)dt.
\]
By  \eqref{THM1_2}  and the spatio-temporal $L^{1}$-bound of $\varphi$,  we have that for any given $\varepsilon_0>0$, there exists $\tau>0$ satisfying
\[
 \int_{\Omega}u(\cdot,t)\le \varepsilon_0\qquad \mbox{ for all } t\ge\tau.
\]
Hence, $\int_{\Omega}u$ converges to $0$ as time goes to infinity and thus,
by \eqref{LEM102}, we obtain the desired convergence
\begin{equation}\label{UINFCONV}
\|u(\cdot,t)\|_{L^{\infty}(\Omega)}\rightarrow 0\qquad \mbox{ as } t\rightarrow\infty.
\end{equation}
 
 Next, we  show the convergence   $v\rightarrow v_{\infty}$.
 Note from  \eqref{VINFMODEL} and the standard elliptic regularity theory
 that $v_{\infty}\in W^{2,p}(\Omega)$ for any finite $p>1$.
Using \eqref{MODEL0} and \eqref{VINFMODEL}, we can derive
\begin{align*}
\frac{1}{2}\frac{d}{dt}&\int_{\Omega}|v(\cdot,t)-v_{\infty}|^{2}+\int_{\Omega}|\nabla(v(\cdot,t)-v_{\infty})|^{2}+\int_{\Omega}|v(\cdot,t)-v_{\infty}|^{2}\\
&=\int_{\Omega}(v(\cdot,t)-v_{\infty})uv^{\lambda}+\int_{\Omega}(v(\cdot,t)-v_{\infty})(\psi-\psi_{\infty})
\end{align*}
which entails by Young's inequality that
\[
\frac{1}{2}\frac{d}{dt}\int_{\Omega}|v(\cdot,t)-v_{\infty}|^{2}+\frac{1}{2}\int_{\Omega}|v(\cdot,t)-v_{\infty}|^{2}
\le \int_{\Omega}u^{2}v^{2\lambda}+\int_{\Omega}|\psi-\psi_{\infty}|^{2}.
\]
By \eqref{THM1_2} and a direct computation, we obtain
\[
\frac{d}{dt}\bke{e^{t}\int_{\Omega}|v(\cdot,t)-v_{\infty}|^{2}}\le Ce^{t}\|u(\cdot,t)\|_{L^{\infty}}^{2}+2e^{t}\int_{\Omega}|\psi-\psi_{\infty}|^{2}
\]
for some constant $C>0$ independent of $t$.
Integrating this with respect to the temporal variable over   $(\tau,2\tau)$ with $\tau>0$ yields
\[
 e^{2\tau}\int_{\Omega}|v(\cdot,2\tau)-v_{\infty}|^{2}
 \]
 \[
  \le  e^{\tau}\int_{\Omega}|v(\cdot,\tau)-v_{\infty}|^{2} +C(e^{2\tau}-e^{\tau})\sup_{\tau\le t\le 2\tau}\|u(\cdot,t)\|_{L^{\infty}}^{2}+2e^{2\tau}\int_{\tau}^{2\tau}\int_{\Omega}|\psi-\psi_{\infty}|^{2}
\]
and therefore,
\[
 \int_{\Omega}|v(\cdot,2\tau)-v_{\infty}|^{2}
 \]
 \[
  \le  e^{-\tau}\int_{\Omega}|v(\cdot,\tau)-v_{\infty}|^{2} +C\sup_{t\ge\tau }\|u(\cdot,t)\|_{L^{\infty}}^{2}+2 \int_{\tau}^{\infty}\int_{\Omega}|\psi-\psi_{\infty}|^{2}.
\]
Due to  \eqref{THM1_2}, \eqref{ASSPHIPSI},  and \eqref{UINFCONV}, for any given $\varepsilon_0>0$, there exists $\tau>0$ satisfying
\[
 \int_{\Omega}|v(\cdot,t)-v_{\infty}|^{2}\le \varepsilon_0\qquad \mbox{ for all } t\ge\tau.
\]
Hence, $L^2(\Omega)$-norm of $ v-v_{\infty}$ approaches $0$ as time tends to infinity and  thus, due to \eqref{LEM103}, we obtain the desired convergence
\[
\|v(\cdot,t)-v_{\infty}\|_{L^\infty(\Omega)}\rightarrow 0\qquad  \mbox{ as } t\rightarrow\infty.
\]
 This completes the proof.
\end{pfthm4}
 
\section*{Acknowledgement}
J. Ahn is supported by the Dongguk University Research Fund of 2020.
K. Kang is partially supported by NRF-2019R1A2C1084685 and NRF-2015R1A5A1009350.
J. Lee is supported by Samsung Science and Technology Foundation under Project Number SSTF-BA1701-05.

\end{document}